\numberwithin{equation}{section}
\newcommand{\R}{\mathbb{R}}
\newcommand{\N}{\mathbb{N}}
\newcommand{\C}{\mathbb{C}}
\newcommand{\E}{\mathbb{E}}
\newcommand{\pr}{\mathbb{P}}
\newcommand{\e}{\operatorname{e}}
\newcommand{\dd}{\,{\mathrm d}}
\newcommand{\db}{{\mathrm d}}
\newcommand{\im}{\operatorname{i}}
\newcommand{\ind}{\mathbbm{1}}
\newcommand{\eps}{\varepsilon}
\newcommand{\cov}{\operatorname{cov}}
\DeclareFontFamily{U}{mathx}{\hyphenchar\font45}
\DeclareFontShape{U}{mathx}{m}{n}{
      <5> <6> <7> <8> <9> <10>
      <10.95> <12> <14.4> <17.28> <20.74> <24.88>
      mathx10
      }{}
\DeclareSymbolFont{mathx}{U}{mathx}{m}{n}
\DeclareMathAccent{\widebar}{0}{mathx}{"73}
\newcommand{\m}{\hspace{0.25mm}}
\newtheorem{lemma}{Lemma}[section]
\newtheorem{propn}[lemma]{Proposition}
\newtheorem{thm}[lemma]{Theorem}
\newtheorem{cor}[lemma]{Corollary}
\newtheorem{defn}[lemma]{Definition}
\newtheorem{remark0}[lemma]{Remark}
\newtheorem{eg0}[lemma]{Example}
\author[J. Foster]{James Foster}
\address{James Foster, Department of Mathematical Sciences, University of Bath,
  Bath, BA2 7AY, United Kingdom.}
\email{jmf68@bath.ac.uk}
\thanks{James Foster was supported by the Department of Mathematical Sciences at the University of Bath and the DataSig programme under the EPSRC grant EP/S026347/1.}
\author[K. Habermann]{Karen Habermann}
\address{Karen Habermann, Department of Statistics, University of
  Warwick, Coventry, CV4 7AL, United Kingdom.}
\email{karen.habermann@warwick.ac.uk}
\subjclass[2020]{60F05, 60H35, 60J65, 41A10, 42A10, 11M06}
\keywords{Brownian motion, Karhunen--Lo{\`e}ve expansion, polynomial approximation, L{\'e}vy area, fluctuations, Riemann zeta function}
\title[Brownian bridge expansions, L{\'e}vy area and the Riemann zeta
  function]{Brownian bridge expansions for L{\'e}vy area approximations and
  particular values of the Riemann zeta function}
\begin{document}
\begin{abstract}
We study approximations for the L{\'e}vy area of Brownian motion which are based on the Fourier series expansion and a polynomial expansion of the associated Brownian bridge. Comparing the asymptotic convergence rates of the L{\'e}vy area approximations, we see that the approximation resulting from the polynomial expansion of the Brownian bridge is more accurate than the Kloeden--Platen--Wright approximation, whilst still only using independent normal random vectors. We then link the asymptotic convergence rates of these approximations to the limiting fluctuations for the corresponding series expansions of the Brownian bridge.
Moreover, and of interest in its own right, the analysis we use to identify the fluctuation processes for the Karhunen--Lo{\`e}ve and Fourier series expansions of the Brownian bridge is extended to give a stand-alone derivation of the values of the Riemann zeta function at even positive integers.
\end{abstract}

\maketitle
\thispagestyle{empty}
\section{Introduction}
One of the well-known applications for expansions of the Brownian bridge is the strong or $L^2(\mathbb{P})$ approximation of stochastic integrals.
Most notably, the second iterated integrals of Brownian motion are required by high order strong numerical methods for general stochastic differential equations (SDEs), as discussed in~\cite{CameronClark, kloedenplaten, StrongSRK}.
Due to integration by parts, such integrals can be expressed in terms of the increment and L\'{e}vy area of Brownian motion.
The approximation of multidimensional L\'{e}vy area is well-studied, see \cite{Davie, Dickinson, Fosterthesis, GainesLyonsInt, GainesLyonsVar, KloedenPlatenWright, KuznetsovLevyArea2, RecentLevyArea, Wiktorsson}, with the majority of the algorithms proposed being based on a Fourier series expansion or the standard piecewise linear approximation of Brownian motion. Some alternatives include \cite{Davie, Fosterthesis, KuznetsovLevyArea2} which consider methods associated with a polynomial expansion of the Brownian bridge.\medskip

Since the advent of Multilevel Monte Carlo (MLMC), introduced by Giles in \cite{MLMC} and subsequently developed in \cite{WeakMLMC, WeakAntitheticMLMC, HighOrderMLMC, MilsteinMLMC, AntitheticMLMC}, L\'{e}vy area approximation has become less prominent in the literature. In particular, the antithetic MLMC method introduced by Giles and Szpruch in \cite{AntitheticMLMC} achieves the optimal complexity for the weak approximation of multidimensional SDEs without the need to generate Brownian L\'{e}vy area. That said, there are concrete applications where the simulation of L\'{e}vy area is beneficial, such as for sampling from non-log-concave distributions using It\^{o} diffusions.
For these sampling problems, high order strong convergence properties of the SDE solver lead to faster mixing properties of the resulting Markov chain Monte Carlo (MCMC) algorithm, see \cite{SRKforMCMC}.\medskip

In this paper, we compare the approximations of L\'{e}vy area based on the Fourier series expansion and on a polynomial expansion of the Brownian bridge. We particularly observe their convergence rates and link those to the fluctuation processes associated with the different expansions of the Brownian bridge. The fluctuation process for the polynomial expansion is studied in~\cite{semicircle}, and our study of the fluctuation process for the Fourier series expansion allows us, at the same time, to determine the fluctuation process for the Karhunen--Lo{\`e}ve expansion of the Brownian bridge.
As an attractive side result, we extend the required analysis to obtain a stand-alone derivation of the values of the Riemann zeta function at even positive integers. Throughout, we denote the positive integers by $\N$ and the non-negative integers by $\N_0$.

\medskip

Let us start by considering a Brownian bridge $(B_t)_{t\in [0,1]}$ in $\R$ with $B_0=B_1=0$. This is the unique continuous-time Gaussian process with mean zero and whose covariance function $K_B$ is given by, for $s,t\in[0,1]$,
\begin{equation}\label{cov_BB}
    K_B(s,t)=\min(s,t)-st\;.
\end{equation}
We are concerned with the following three expansions of the Brownian bridge.
The Karhunen--Lo{\`e}ve expansion of the Brownian bridge, see Lo{\`e}ve~\cite[p.~144]{loeve2},
is of the form, for $t\in[0,1]$,
\begin{equation}\label{eq:bridge_kl_expansion}
  B_t=\sum_{k=1}^\infty\frac{2\sin(k\pi t)}{k\pi}
  \int_0^1\cos(k\pi r)\dd B_r\;.
\end{equation}
The Fourier series expansion of the Brownian bridge, see Kloeden--Platen~\cite[p.~198]{kloedenplaten} or Kahane~\cite[Sect.~16.3]{kahane}, yields, for $t\in[0,1]$,
\begin{equation}\label{eq:bridge_fourier_expansion}
  B_t=\frac{1}{2}a_0+\sum_{k=1}^\infty\left(
    a_k\cos(2k\pi t)+b_k\sin(2k\pi t)
  \right)\;,
\end{equation}
where, for $k\in\N_0$, 
\begin{equation}\label{eq:fourier_coefficients}
  a_k=2\int_0^1 \cos(2k\pi r)B_r\dd r\quad\text{and}\quad
  b_k=2\int_0^1 \sin(2k\pi r)B_r\dd r\;.
\end{equation}
A polynomial expansion of the Brownian bridge in terms of the shifted Legendre polynomials $Q_k$ on the interval $[0,1]$ of degree $k$, see~\cite{foster,semicircle}, is given by, for $t\in[0,1]$,
\begin{equation}\label{eq:bridge_poly_expansion}
    B_t=\sum_{k=1}^\infty (2k+1)\m c_k \int_0^t Q_k(r)\dd r\;,
\end{equation}
where, for $k\in\N$, 
\begin{equation}\label{eq:poly_coefficients}
  c_k=\int_0^1 Q_k(r) \dd B_r\;.
\end{equation}
These expansions are summarised in Table~\ref{table:bridge_expansions} in Appendix~\ref{appendix} and they are discussed in more detail in Section~\ref{sect:expansions}. For an implementation of the corresponding approximations for Brownian motion as Chebfun examples into MATLAB, see Filip, Javeed and Trefethen~\cite{chebfun2} as well as Trefethen~\cite{chebfun1}.\medskip

We remark that the polynomial expansion~(\ref{eq:bridge_poly_expansion}) can be viewed as a Karhunen--Lo{\`e}ve expansion of the Brownian bridge with respect to the weight function $w$ on $(0,1)$ given by $w(t) = \frac{1}{t(1-t)}$. This approach is employed in~\cite{foster} to derive the expansion along with the standard optimality property of Karhunen--Lo{\`e}ve expansions. In this setting, the polynomial approximation of $(B_t)_{t\in [0,1]}$ is optimal among truncated series expansions in a weighted $L^2(\mathbb{P})$ sense corresponding to the non-constant weight function $w$.
To avoid confusion, we still adopt the convention throughout to reserve the term Karhunen--Lo{\`e}ve expansion for~(\ref{eq:bridge_kl_expansion}), whereas~(\ref{eq:bridge_poly_expansion}) will be referred to as the polynomial expansion.

\medskip

Before we investigate the approximations of L\'{e}vy area based on the different expansions of the Brownian bridge, we first analyse the fluctuations associated with the expansions. 
The fluctuation process for the polynomial expansion is studied and characterised in~\cite{semicircle}, and these results are recalled in Section~\ref{sect:poly}.
The fluctuation processes $(F_t^{N,1})_{t\in[0,1]}$ for the Karhunen--Lo{\`e}ve expansion and the fluctuation processes $(F_t^{N,2})_{t\in[0,1]}$ for the Fourier series expansion are defined as, for $N\in\N$,
\begin{equation}\label{KL_fluctuation}
    F_t^{N,1}=\sqrt{N}\left(B_t-\sum_{k=1}^N\frac{2\sin(k\pi t)}{k\pi}\int_0^1\cos(k\pi r)\dd B_r\right)\;,
\end{equation}
and
\begin{equation}\label{fourier_fluctuation}
    F_t^{N,2}=\sqrt{2N}\left(B_t-\frac{1}{2}a_0-\sum_{k=1}^N\left(a_k\cos(2k\pi t)+b_k\sin(2k\pi t)\right)\right)\;.
\end{equation}
The scaling by $\sqrt{2N}$ in the process $(F_t^{N,2})_{t\in[0,1]}$ is the natural scaling to use because increasing $N$ by one results in the subtraction of two additional Gaussian random variables.
We use $\E$ to denote the expectation with respect to Wiener measure $\pr$.

\begin{thm}\label{thm:fluctuations}
    The fluctuation processes $(F_t^{N,1})_{t\in[0,1]}$ for the Karhunen--Lo{\`e}ve expansion converge in finite dimensional distributions as $N\to\infty$ to the collection $(F_t^1)_{t\in[0,1]}$ of independent Gaussian random variables with mean zero and variance
    \begin{equation*}
        \E\left[\left(F_t^1\right)^2\m\right]=
        \begin{cases}
            \frac{1}{\pi^2} & \text{if }t\in(0,1)\\
            0 & \text{if } t=0\text{ or }t=1
        \end{cases}\;.
    \end{equation*}
    The fluctuation processes $(F_t^{N,2})_{t\in[0,1]}$ for the Fourier expansion converge in finite dimensional distributions as $N\to\infty$ to the collection $(F_t^2)_{t\in[0,1]}$ of zero-mean Gaussian random variables whose covariance structure is given by, for $s,t\in[0,1]$,
    \begin{equation*}
        \E\left[F_s^2 F_t^2\m\right]=
        \begin{cases}
            \frac{1}{\pi^2} & \text{if } s=t\text{ or } s,t\in\{0,1\}\\\
            0 & \text{otherwise}
        \end{cases}\;.
    \end{equation*}
\end{thm}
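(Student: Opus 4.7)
The plan is to exploit that for each $N$, each $i\in\{1,2\}$ and each finite set of times $t_1,\ldots,t_m$, the vector $(F_{t_1}^{N,i},\ldots,F_{t_m}^{N,i})$ is centred and jointly Gaussian, since it is the $L^2(\pr)$-limit of the truncation tail, a series of independent Gaussian modes coming from the relevant expansion of $B$. Convergence in finite dimensional distributions therefore reduces to pointwise convergence of the covariances $C_N^i(s,t):=\E[F_s^{N,i}F_t^{N,i}]$ to the limits asserted in the theorem. The whole argument then funnels down to the asymptotic analysis of trigonometric tails of the form $\sum_{k>N}k^{-2}\cos(k\alpha)$.

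For the Karhunen--Lo\`{e}ve expansion, I would first check that $Z_k:=\int_0^1\cos(k\pi r)\dd B_r$ defines an independent sequence of $N(0,\tfrac{1}{2})$ random variables, either by integrating by parts to rewrite $Z_k=k\pi\int_0^1\sin(k\pi r)B_r\dd r$ and invoking the fact that $\sqrt{2}\sin(k\pi\cdot)$ are the normalised eigenfunctions of the covariance operator associated with~(\ref{cov_BB}) with eigenvalues $(k\pi)^{-2}$, or by a direct calculation from~(\ref{cov_BB}). Combined with the product-to-sum identity, this produces
\begin{equation*}
    C_N^1(s,t)=\frac{N}{\pi^2}\sum_{k=N+1}^\infty\frac{\cos(k\pi(s-t))-\cos(k\pi(s+t))}{k^2}.
\end{equation*}
For the Fourier expansion, integrating by parts in~(\ref{eq:fourier_coefficients}) and using the decomposition $B_r=W_r-rW_1$ shows that $a_j$ and $b_k$ are mutually uncorrelated centred Gaussians with $\E[a_k^2]=\E[b_k^2]=1/(2k^2\pi^2)$ for $k\geq 1$, so that
\begin{equation*}
    C_N^2(s,t)=\frac{N}{\pi^2}\sum_{k=N+1}^\infty\frac{\cos(2k\pi(s-t))}{k^2}.
\end{equation*}

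The two limits then follow from a single dichotomy for $\sum_{k>N}k^{-2}\cos(k\alpha)$ as $N\to\infty$: if $\alpha\in 2\pi\Z$ every term equals $k^{-2}$ and the sum equals $N^{-1}+O(N^{-2})$, so multiplication by $N/\pi^2$ yields $1/\pi^2$; otherwise the partial sums of $\cos(k\alpha)$ are uniformly bounded and Abel summation delivers an $O(N^{-2})$ bound, which vanishes after multiplication by $N$. Applying this to the two covariance formulas and checking for which pairs $(s,t)\in[0,1]^2$ the arguments $\pi(s\pm t)$, respectively $2\pi(s-t)$, lie in $2\pi\Z$ recovers the stated structures: the Karhunen--Lo\`{e}ve covariance vanishes off the diagonal and at the endpoints (where the sine factors in~(\ref{KL_fluctuation}) make this immediate), while the Fourier covariance additionally attains the value $1/\pi^2$ on $\{0,1\}^2$ because the pathwise identity $F_0^{N,2}=F_1^{N,2}$ already holds for every $N$. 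The one genuine subtlety is in the careful bookkeeping of these exceptional arguments, together with the justification of the expression of $C_N^i(s,t)$ as a clean tail sum; the latter follows from standard $L^2(\pr)$-convergence of the two expansions at each fixed point, via Mercer's theorem for the Karhunen--Lo\`{e}ve case and classical $L^2$-Fourier theory for the Fourier case.
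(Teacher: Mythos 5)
Your proposal is correct, and the reduction to pointwise convergence of covariances followed by L\'{e}vy's continuity theorem for Gaussian vectors is exactly the paper's final step (proof of Theorem~\ref{thm:fluctuations} in Section~\ref{sect:fluct2}). Where you genuinely diverge is in the core analytic input. The paper proves the on-diagonal limit (Proposition~\ref{propn:ondiagonal}) by combining uniform boundedness and equicontinuity estimates (Lemmas~\ref{lem:uniform_bound1} and~\ref{lem:uniform_bound2}, the latter via the Dirichlet kernel) with an Arzel\`{a}--Ascoli compactness argument and an identification of the limit through its moments $\int_0^1 NC_1^N(t,t)t^n\dd t$; it then deduces Corollary~\ref{cor1} and from that the off-diagonal case. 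You instead attack the single quantity $N\sum_{k>N}k^{-2}\cos(k\alpha)$ directly: for $\alpha\in 2\pi\Z$ the tail is $N^{-1}+O(N^{-2})$, and otherwise Abel summation against the uniformly bounded partial sums of $\cos(k\alpha)$ gives $O_{\alpha}(N^{-2})$, so the limit is $1$ or $0$ accordingly. This is sound (your bound is $|\sin(\alpha/2)|^{-1}(N+1)^{-2}$ after telescoping), it is more elementary, and it handles diagonal and off-diagonal cases in one stroke; the paper explicitly acknowledges that such a direct route exists but prefers the moment analysis because it doubles as the stand-alone derivation of $\zeta(2n)$ in Section~\ref{sect:zeta}, a byproduct your argument does not produce. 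The only place you are terse is the identity $\E[F_s^{N,2}F_t^{N,2}]=2N\sum_{k>N}\cos(2k\pi(t-s))/(2k^2\pi^2)$, which the paper obtains less directly (Lemma~\ref{lem:cov2} plus a polarised Parseval computation); your justification via $L^2(\pr)$ convergence of the expansion at fixed $t$ together with uncorrelatedness of the modes $a_k\cos(2k\pi t)+b_k\sin(2k\pi t)$ across $k$ is legitimate, but you should state that orthogonality explicitly since, as the paper stresses, the Fourier coefficients are not all independent ($a_0$ is correlated with each $a_k$).
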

The difference between the fluctuation result for the Karhunen--Lo{\`e}ve expansion and the fluctuation result for the polynomial expansion, see~\cite[Theorem~1.6]{semicircle} or Section~\ref{sect:poly}, is that there the variances of the independent Gaussian random variables follow the semicircle $\frac{1}{\pi}\sqrt{t(1-t)}$ whereas here they are constant on $(0,1)$, see Figure~\ref{diag:fluctuation}. The limit fluctuations for the Fourier series expansion further exhibit endpoints which are correlated.\medskip

\begin{figure}[h]
\centering
\includegraphics[width=0.9\textwidth]{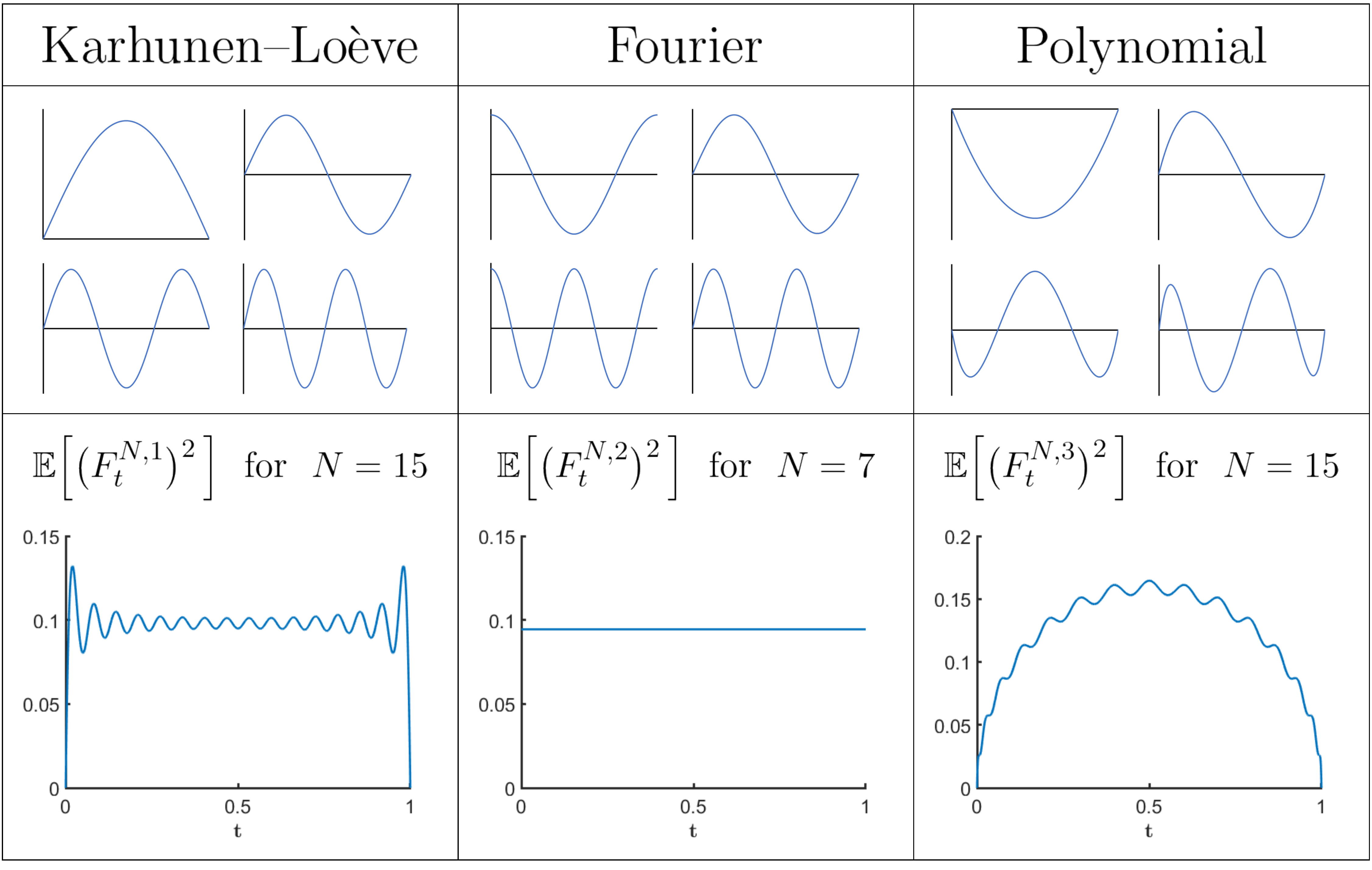}
\caption{Table showing basis functions and fluctuations for the Brownian bridge expansions.}
\label{diag:fluctuation}
\end{figure}

As pointed out in~\cite{semicircle}, the reason for considering convergence in finite dimensional distributions for the fluctuation processes is that the limit fluctuations neither have a realisation as processes in $C([0,1],\R)$, nor are they equivalent to measurable processes.\medskip

We prove Theorem~\ref{thm:fluctuations} by studying the covariance functions of the Gaussian processes $(F_t^{N,1})_{t\in[0,1]}$ and $(F_t^{N,2})_{t\in[0,1]}$ given in Lemma~\ref{lem:cov1} and Lemma~\ref{lem:cov2} in the limit $N\to\infty$. The key ingredient is the following limit theorem for sine functions, which we see concerns the pointwise convergence for the covariance function of $(F_t^{N,1})_{t\in[0,1]}$.
\begin{thm}\label{thm:sinflucutation}
    For all $s,t\in[0,1]$, we have
    \begin{equation*}
        \lim_{N\to\infty}N\left(\min(s,t)-st-\sum_{k=1}^N\frac{2\sin(k\pi s)\sin(k\pi t)}{k^2\pi^2}\right)=
        \begin{cases}
            \frac{1}{\pi^2} & \text{if } s=t\text{ and } t\in(0,1)\\
            0 & \text{otherwise}
        \end{cases}\;.
    \end{equation*}
\end{thm}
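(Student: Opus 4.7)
My plan is to reduce the identity to the classical Fourier series expansion
$\sum_{k=1}^{\infty} k^{-2}\cos(k\theta) = \pi^2/6 - \pi\theta/2 + \theta^2/4$, valid for $\theta \in [0, 2\pi]$, combined with a careful analysis of the tail. The first step is to apply the product-to-sum identity $2\sin(k\pi s)\sin(k\pi t) = \cos(k\pi(s-t)) - \cos(k\pi(s+t))$. Since $\pi|s-t| \in [0,\pi]$ and $\pi(s+t) \in [0, 2\pi]$ for $s,t \in [0,1]$, the cited Fourier identity, together with the elementary relations $\tfrac{s+t-|s-t|}{2} = \min(s,t)$ and $\tfrac{(s+t)^2-(s-t)^2}{4} = st$, shows that the full infinite series equals $\min(s,t)-st$. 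The quantity inside $N(\cdots)$ therefore collapses to a tail, and what needs to be computed is
\begin{equation*}
\lim_{N\to\infty} \frac{N}{\pi^2}\sum_{k=N+1}^{\infty}\frac{\cos(k\pi|s-t|) - \cos(k\pi(s+t))}{k^2}.
\end{equation*}

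The problem is thus reduced to establishing the auxiliary fact that, for each fixed $\theta \in [0, 2\pi]$,
\begin{equation*}
\lim_{N\to\infty}N\sum_{k=N+1}^{\infty}\frac{\cos(k\theta)}{k^2} = \begin{cases} 1 & \text{if } \theta \in \{0,2\pi\}, \\ 0 & \text{otherwise.}\end{cases}
\end{equation*}
When $\theta\in\{0,2\pi\}$, this is just the integral comparison $\sum_{k=N+1}^{\infty} k^{-2} = N^{-1}+O(N^{-2})$. For $\theta \in (0, 2\pi)$, the partial sums of $e^{ik\theta}$ are bounded by $|\sin(\theta/2)|^{-1}$ as real parts of a geometric series, so Abel summation, combined with the telescoping identity $k^{-2} - (k+1)^{-2} = (2k+1)\m k^{-2}(k+1)^{-2}$, yields the estimate $\sum_{k=N+1}^{\infty} k^{-2}\cos(k\theta) = O(N^{-2})$, and multiplication by $N$ produces $0$ in the limit.

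To finish, I apply this auxiliary statement with $\theta = \pi|s-t|$ and with $\theta = \pi(s+t)$: the first contributes a limit of $\pi^{-2}$ precisely when $s=t$ (i.e.\ $|s-t|=0$), while the second contributes $\pi^{-2}$ precisely when $s+t\in\{0,2\}$, i.e.\ when $s=t\in\{0,1\}$. Subtracting the two contributions yields $\pi^{-2}$ in the single surviving case $s=t\in(0,1)$ and $0$ in every other case, matching the statement. The main technical obstacle is the Abel-summation step for the oscillatory tails, where the implicit constant in $O(N^{-2})$ depends on $\theta$ through $|\sin(\theta/2)|^{-1}$ and blows up near the endpoints of $[0,2\pi]$; this degeneracy is exactly the mechanism that produces a nontrivial atom on the diagonal $s=t\in(0,1)$ and forces the convergence to be only pointwise rather than uniform.
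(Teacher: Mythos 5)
Your proof is correct, but it follows a genuinely different route from the paper's. You establish the full-series identity $\min(s,t)-st=\sum_{k\ge 1}2\sin(k\pi s)\sin(k\pi t)/(k^2\pi^2)$ from the classical Fourier expansion of $\theta^2/4-\pi\theta/2+\pi^2/6$, and then handle the tail directly: Abel summation against the bounded partial sums of $\mathrm{e}^{\mathrm{i}k\theta}$ gives $\sum_{k>N}\cos(k\theta)/k^2=O_\theta(N^{-2})$ for $\theta\in(0,2\pi)$, while the singular cases $\theta\in\{0,2\pi\}$ reduce to $N\sum_{k>N}k^{-2}\to 1$; the case analysis for $\theta=\pi|s-t|$ versus $\theta=\pi(s+t)$ is carried out correctly, including the cancellation at $s=t\in\{0,1\}$. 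The paper instead obtains the full-series identity from Mercer's theorem, proves the on-diagonal limit by a moment analysis (integrating $NC_1^N(t,t)$ against $t^n$, with uniform boundedness and an Arzel\`a--Ascoli/diagonal argument supplying continuity and uniqueness of subsequential limits), and only then deduces the off-diagonal decay (Corollary~\ref{cor1}) as a consequence. The paper explicitly flags your route as the available shortcut --- ``one could prove Corollary~\ref{cor1} directly \ldots\ integrating the Dirichlet kernel twice'' --- and your summation by parts is precisely the discrete version of that; it is shorter, more elementary, and yields the stronger quantitative tail bound $O_\theta(N^{-2})$ rather than the bare $o(N^{-1})$ of Corollary~\ref{cor1}. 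What the paper's longer route buys is twofold: the moment machinery doubles as the stand-alone derivation of $\zeta(2n)$ in Section~\ref{sect:zeta}, and it avoids presupposing $\zeta(2)=\pi^2/6$, which your argument builds in through the constant term of the classical Fourier series (the paper is explicit that the Basel identity~(\ref{eq:basel}) is meant to be a consequence of the analysis, not an input). That is a difference of narrative design, not of mathematical validity.
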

The above result serves as one of four base cases in the analysis performed in~\cite{HabermannRecent} of the asymptotic error arising when approximating the Green's function of a Sturm--Liouville problem through a truncation of its eigenfunction expansion. The work~\cite{HabermannRecent} offers a unifying view for Theorem~\ref{thm:sinflucutation} and~\cite[Theorem~1.5]{semicircle}.

\medskip

The proof of Theorem~\ref{thm:sinflucutation} is split into an on-diagonal and an off-diagonal argument. We start by proving the convergence on the diagonal away from its endpoints by establishing locally uniform convergence, which ensures continuity of the limit function, and by using a moment argument to identify the limit. As a consequence of the on-diagonal convergence, we obtain the next corollary which then implies the off-diagonal convergence in Theorem~\ref{thm:sinflucutation}.
\begin{cor}\label{cor1}
    For all $t\in(0,1)$, we have
    \begin{equation*}
        \lim_{N\to\infty} N\sum_{k=N+1}^\infty\frac{\cos(2k\pi t)}{k^2\pi^2}=0\;.
    \end{equation*}
\end{cor}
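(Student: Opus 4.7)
The plan is to derive Corollary~\ref{cor1} as an immediate consequence of the on-diagonal case ($s=t$) of Theorem~\ref{thm:sinflucutation}, which per the discussion preceding the statement will already be in hand. The central observation is that the expression inside the limit in Theorem~\ref{thm:sinflucutation} is a genuine Fourier tail: either by Mercer's theorem applied to the Karhunen--Lo{\`e}ve expansion~(\ref{eq:bridge_kl_expansion}), or by a direct computation (noting that since $\int_0^1\cos(k\pi r)\dd r=0$ for $k\geq 1$, we have $\var\int_0^1\cos(k\pi r)\dd B_r=\frac{1}{2}$), one obtains the identity
\begin{equation*}
  \min(s,t)-st=\sum_{k=1}^\infty\frac{2\sin(k\pi s)\sin(k\pi t)}{k^2\pi^2}
\end{equation*}
for all $s,t\in[0,1]$, so that the quantity whose scaled limit is computed in Theorem~\ref{thm:sinflucutation} is exactly $\sum_{k=N+1}^\infty\frac{2\sin(k\pi s)\sin(k\pi t)}{k^2\pi^2}$.

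Specialising to $s=t\in(0,1)$ and using $2\sin^2(k\pi t)=1-\cos(2k\pi t)$, the on-diagonal statement of Theorem~\ref{thm:sinflucutation} reads
\begin{equation*}
  \lim_{N\to\infty}N\sum_{k=N+1}^\infty\frac{1-\cos(2k\pi t)}{k^2\pi^2}=\frac{1}{\pi^2}\;.
\end{equation*}
To isolate the cosine tail, I would combine this with the elementary estimate $N\sum_{k=N+1}^\infty\frac{1}{k^2}\to 1$ as $N\to\infty$, obtained by sandwiching $\frac{1}{k^2}$ between $\int_k^{k+1}\frac{\dd x}{x^2}$ and $\int_{k-1}^k\frac{\dd x}{x^2}$ (or equivalently via Euler--Maclaurin). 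This gives $N\sum_{k=N+1}^\infty\frac{1}{k^2\pi^2}\to\frac{1}{\pi^2}$, and subtracting from the previous limit yields $\lim_{N\to\infty}N\sum_{k=N+1}^\infty\frac{\cos(2k\pi t)}{k^2\pi^2}=0$, which is Corollary~\ref{cor1}.

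I do not expect any essential obstacle: once the on-diagonal limit of Theorem~\ref{thm:sinflucutation} is available, the corollary is a two-line deduction resting on a single telescoping of the limit of a difference (both limits on the right-hand side exist, so the split is legitimate). The only mildly subtle item is the Fourier sine expansion of the covariance $\min(s,t)-st$; should a self-contained verification be preferred to invoking the Karhunen--Lo{\`e}ve framework, a double integration by parts in $\int_0^1(\min(s,t)-st)\sin(k\pi t)\dd t$ recovers it directly.
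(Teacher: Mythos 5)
Your proposal is correct and follows essentially the same route as the paper: the paper likewise combines the on-diagonal limit (its Proposition~\ref{propn:ondiagonal}), the Mercer representation of $\min(s,t)-st$, the identity $\cos(2k\pi t)=1-2\left(\sin(k\pi t)\right)^2$, and the elementary fact $N\sum_{k=N+1}^\infty k^{-2}\to 1$ (obtained there by telescoping rather than integral comparison) to subtract the two limits. No gaps.
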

Moreover, and of interest in its own right, the moment analysis we use to prove the on-diagonal convergence in Theorem~\ref{thm:sinflucutation} leads to a stand-alone derivation of the result that the values of the Riemann zeta function $\zeta\colon\C\setminus\{1\}\to\C$ at even positive integers can be expressed in terms of the Bernoulli numbers $B_{2n}$ as, for $n\in\N$,
\begin{equation*}
  \zeta(2n)=(-1)^{n+1}\frac{\left(2\pi\right)^{2n}B_{2n}}{2(2n)!}\;,
\end{equation*}
see Borevich and Shafarevich~\cite{zeta_bernoulli}.
In particular, the identity
\begin{equation}\label{eq:basel}
    \sum_{k=1}^\infty \frac{1}{k^2} =\frac{\pi^2}{6}\;,
\end{equation}
that is, the resolution to the Basel problem posed by Mengoli~\cite{basel} is a consequence of our analysis and not a prerequisite for it.

\medskip

We turn our attention to studying approximations of second iterated integrals of Brownian motion.
For $d\geq 2$, let $(W_t)_{t\in[0,1]}$ denote a $d$-dimensional Brownian motion and let $(B_t)_{t\in[0,1]}$ given by $B_t=W_t-tW_1$ be its associated Brownian bridge in $\R^d$. We denote the independent components of $(W_t)_{t\in[0,1]}$ by $(W_t^{(i)})_{t\in[0,1]}$, for $i\in\{1,\dots,d\}$, and the components of $(B_t)_{t\in[0,1]}$ by $(B_t^{(i)})_{t\in[0,1]}$, which are also independent by construction.
We now focus on approximations of L\'{e}vy area.

\begin{defn}\label{def:levy_area} The L\'{e}vy area of the $d$-dimensional Brownian motion $W$ over the interval $[s,t]$ is the antisymmetric $d\times d$ matrix $A_{s,t}$ with the following entries, for $i,j\in\{1,\dots,d\}$,
\begin{equation*}
A_{s,t}^{(i,j)}  := \frac{1}{2}\left(\int_s^t \left(W_r^{(i)} - W_s^{(i)}\right)\dd W_r^{(j)} - \int_s^t \left(W_r^{(j)} - W_s^{(j)}\right)\dd W_r^{(i)}\right)\;.
\end{equation*}
\end{defn}
For an illustration of L\'{e}vy area for a two-dimensional Brownian motion, see Figure~\ref{diag:levyarea}.
\begin{remark0}\rm
Given the increment $W_t - W_s$ and the L\'{e}vy area $A_{s,t}\m$, we can recover the second iterated integrals of Brownian motion using integration by parts as, for $i,j\in\{1,\dots,d\}$ with $i\neq j$,
\begin{equation*}
    \int_s^t \left(W_r^{(i)} - W_s^{(i)}\right)\dd W_r^{(j)} = \frac{1}{2}\left(W_t^{(i)} - W_s^{(i)}\right)\left(W_t^{(j)} - W_s^{(j)}\right) + A_{s,t}^{(i,j)}\;.
\end{equation*}
\end{remark0}
\begin{figure}[ht]
\centering
\includegraphics[width=0.875\textwidth]{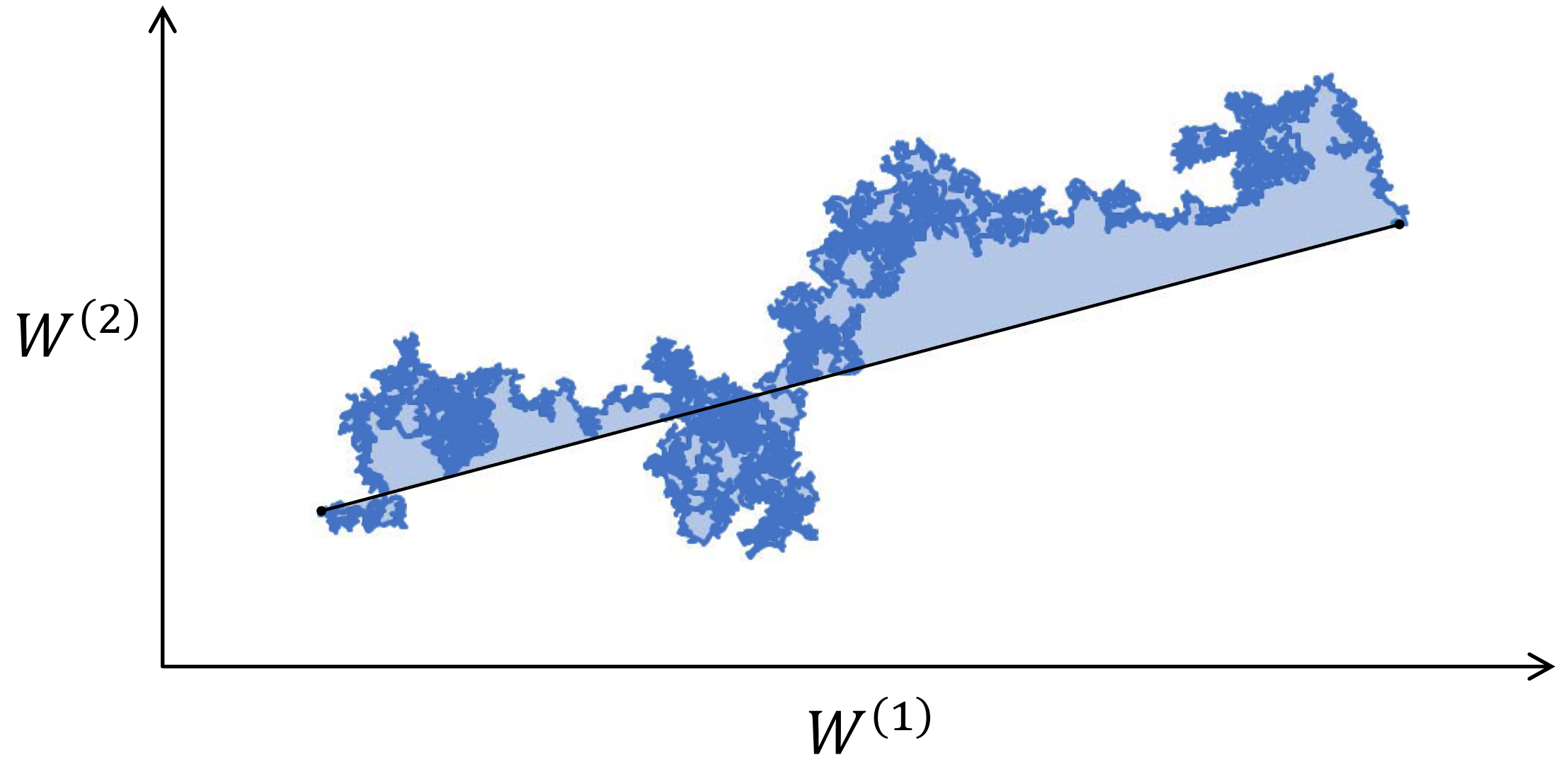}
\caption{L\'{e}vy area is the chordal area between independent Brownian motions.}
\label{diag:levyarea}
\end{figure}

We consider the sequences $\{a_k\}_{k\in\N_0}\m$, $\{b_k\}_{k\in\N}$ and $\{c_k\}_{k\in\N}$ of Gaussian random vectors, where the coordinate random variables $a_k^{(i)}$, $b_k^{(i)}$ and $c_k^{(i)}$ are defined for $i\in\{1,\dots,d\}$ by~(\ref{eq:fourier_coefficients}) and (\ref{eq:poly_coefficients}), respectively, in terms of the Brownian bridge $(B_t^{(i)})_{t\in[0,1]}$. Using the random coefficients arising from the Fourier series expansion~(\ref{eq:bridge_fourier_expansion}), we obtain the approximation of Brownian L\'{e}vy area proposed by Kloeden and Platen~\cite{kloedenplaten} and Milstein~\cite{MilsteinBook}. Further approximating terms so that only independent random coefficients are used yields the Kloeden--Platen--Wright approximation in~\cite{KloedenPlatenWright, MilsteinBook2, Wiktorsson}. Similarly, using the random coefficients from the polynomial expansion~(\ref{eq:bridge_poly_expansion}), we obtain the L\'{e}vy area approximation first proposed by Kuznetsov in \cite{KuznetsovLevyArea1}. These L\'{e}vy area approximations are summarised in Table~\ref{table:area_expansions} in Appendix~\ref{appendix} and have the following asymptotic convergence rates.

\begin{thm}[Asymptotic convergence rates of L\'{e}vy area approximations]\label{thm:asmyp_conv_rates_combined}
For $n\in\N$, we set $N=2n$ and define approximations $\widehat{A}_{n}$, $\widetilde{A}_{n}$ and $\widebar{A}_{2n}$ of the L\'{e}vy area $A_{0,1}$ by, for $i,j\in\{1,\dots,d\}$,
\begin{align}
	 \widehat{A}_{n}^{\m(i,j)} & := \frac{1}{2}\left(a_0^{(i)}W_1^{(j)} - W_1^{(i)}a_0^{(j)}\right) + \pi\sum_{k=1}^{n-1} k\left(a_{k}^{(i)}b_k^{(j)} - b_k^{(i)}a_{k}^{(j)}\right),\label{eq:levy_fourier_approx_intro}\\[3pt]
	 \widetilde{A}_{n}^{\m(i,j)} & := \pi\sum_{k=1}^{n-1} k\left(a_{k}^{(i)}\left(b_k^{(j)} - \frac{1}{k\pi}W_1^{(j)}\right) - \left(b_k^{(i)} - \frac{1}{k\pi}W_1^{(i)}\right)a_{k}^{(j)}\right),\label{eq:kpw_fourier_approx_intro}\\[3pt]
\widebar{A}_{2n}^{\m(i,j)} & := \frac{1}{2}\left(W_1^{(i)}c_1^{(j)} - c_1^{(i)}W_1^{(j)}\right) + \frac{1}{2}\sum_{k=1}^{2n-1}\left(c_k^{(i)}c_{k+1}^{(j)} - c_{k+1}^{(i)}c_k^{(j)}\right).\label{eq:poly_area_approx_intro}
\end{align}
Then $\widehat{A}_{n}$, $\widetilde{A}_{n}$ and $\widebar{A}_{2n}$ are antisymmetric $d \times d$ matrices and, for $i\neq j$ and as $N\to\infty$, we have
\begin{align*}
\E\bigg[\Big(A_{0,1}^{(i,j)} - \widehat{A}_{n}^{\m(i,j)}\Big)^2\m\bigg] & \sim \frac{1}{\pi^2}\bigg(\frac{1}{N}\bigg)\;,\\[3pt]
\E\bigg[\Big(A_{0,1}^{(i,j)} - \widetilde{A}_{n}^{\m(i,j)}\Big)^2\m\bigg] & \sim \frac{3}{\pi^2}\bigg(\frac{1}{N}\bigg)\;,\\[3pt]
\E\bigg[\Big(A_{0,1}^{(i,j)} - \widebar{A}_{2n}^{\m(i,j)}\Big)^2\m\bigg] & \sim \frac{1}{8}\bigg(\frac{1}{N}\bigg)\;.
\end{align*}
\end{thm}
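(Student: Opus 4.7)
The plan is to treat all three approximations by a single three-step recipe: first, write $A_{0,1}^{(i,j)}$ as an exact infinite series in the Gaussian coefficients of the relevant Brownian bridge expansion, so that each of $\widehat{A}_n$, $\widetilde{A}_n$ and $\widebar{A}_{2n}$ becomes an explicit partial sum and the error an explicit tail; second, compute the $L^2(\pr)$-norm of that tail using the pairwise independence of the coefficients; third, extract the asymptotic order of the resulting series. Antisymmetry of each matrix is transparent from the formulas, so all the work is in the variance estimates. The main technical obstacle I anticipate is the careful derivation of the three exact series representations, in particular verifying the cross-term cancellation that arises in the Kloeden--Platen--Wright case.

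For $\widehat{A}_n$, I would substitute~(\ref{eq:bridge_fourier_expansion}) into $A_{0,1}^{(i,j)} = \tfrac{1}{2}\int_0^1 (W_r^{(i)}\dd W_r^{(j)} - W_r^{(j)}\dd W_r^{(i)})$ via $W_r = B_r + rW_1$ and evaluate the stochastic integrals term by term, using an integration by parts to recognize $\int_0^1 \cos(2k\pi r)\dd W_r^{(j)} = k\pi b_k^{(j)}$ and $\int_0^1 \sin(2k\pi r)\dd W_r^{(j)} = -k\pi a_k^{(j)}$. The result is
\begin{equation*}
A_{0,1}^{(i,j)} = \tfrac{1}{2}\bigl(a_0^{(i)}W_1^{(j)} - W_1^{(i)}a_0^{(j)}\bigr) + \pi\sum_{k=1}^\infty k\bigl(a_k^{(i)}b_k^{(j)} - b_k^{(i)}a_k^{(j)}\bigr),
\end{equation*}
so $A_{0,1}^{(i,j)} - \widehat{A}_n^{(i,j)}$ is exactly the $k\ge n$ tail. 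The same integration by parts shows $\var(a_k) = \var(b_k) = 1/(2k^2\pi^2)$, and for $i\ne j$ the factors in distinct tail terms are mutually independent, so the error variance equals $\sum_{k=n}^\infty 2\pi^2 k^2\var(a_k)\var(b_k) = \sum_{k=n}^\infty 1/(2k^2\pi^2) \sim 1/(2\pi^2 n) = 1/(\pi^2 N)$.

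For $\widetilde{A}_n$, a direct comparison of~(\ref{eq:levy_fourier_approx_intro}) and~(\ref{eq:kpw_fourier_approx_intro}) gives
\begin{equation*}
\widehat{A}_n^{(i,j)} - \widetilde{A}_n^{(i,j)} = W_1^{(j)}\Bigl(\tfrac{1}{2}a_0^{(i)} + \sum_{k=1}^{n-1} a_k^{(i)}\Bigr) - W_1^{(i)}\Bigl(\tfrac{1}{2}a_0^{(j)} + \sum_{k=1}^{n-1} a_k^{(j)}\Bigr),
\end{equation*}
and evaluating~(\ref{eq:bridge_fourier_expansion}) at $t=0$ yields $\tfrac{1}{2}a_0 + \sum_{k=1}^\infty a_k = 0$, so each bracket equals the negative tail $-\sum_{k=n}^\infty a_k$. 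Combining gives
\begin{equation*}
A_{0,1}^{(i,j)} - \widetilde{A}_n^{(i,j)} = \pi\sum_{k=n}^\infty k\bigl(a_k^{(i)}b_k^{(j)} - b_k^{(i)}a_k^{(j)}\bigr) + W_1^{(i)}\sum_{k=n}^\infty a_k^{(j)} - W_1^{(j)}\sum_{k=n}^\infty a_k^{(i)}.
\end{equation*}
Using the classical independence of the Brownian bridge from $W_1$, a Wick expansion shows the two sums on the right are uncorrelated. The first contributes $1/(\pi^2 N)$ as above, and the second contributes $2\sum_{k=n}^\infty\var(a_k) = 1/(\pi^2 n) \sim 2/(\pi^2 N)$, for a total of $3/(\pi^2 N)$.

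For $\widebar{A}_{2n}$, the analogous exact representation follows from~(\ref{eq:bridge_poly_expansion}) together with the shifted-Legendre identity $\int_0^t Q_k(r)\dd r = (Q_{k+1}(t) - Q_{k-1}(t))/(2(2k+1))$ and reads
\begin{equation*}
A_{0,1}^{(i,j)} = \tfrac{1}{2}\bigl(W_1^{(i)}c_1^{(j)} - c_1^{(i)}W_1^{(j)}\bigr) + \tfrac{1}{2}\sum_{k=1}^\infty\bigl(c_k^{(i)}c_{k+1}^{(j)} - c_{k+1}^{(i)}c_k^{(j)}\bigr),
\end{equation*}
so the error is $\tfrac{1}{2}\sum_{k=2n}^\infty(c_k^{(i)}c_{k+1}^{(j)} - c_{k+1}^{(i)}c_k^{(j)})$. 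Since $\int_0^1 Q_k(r)\dd r = 0$ for $k\ge 1$, one has $c_k = \int_0^1 Q_k(r)\dd W_r$, whence the $\{c_k\}$ are independent with $\var(c_k) = 1/(2k+1)$. The error variance then telescopes via partial fractions as
\begin{equation*}
\tfrac{1}{2}\sum_{k=2n}^\infty\frac{1}{(2k+1)(2k+3)} = \frac{1}{4(4n+1)} \sim \frac{1}{16n} = \frac{1}{8N},
\end{equation*}
which is the cleanest case precisely because the sum telescopes exactly.
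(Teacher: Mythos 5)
Your proposal is correct, and the overall strategy — exact series representations for $A_{0,1}^{(i,j)}$ in each family of coefficients, followed by an $L^2(\pr)$ computation of the tail using orthogonality and a telescoping/asymptotic evaluation — is exactly how the paper proceeds (Theorems~\ref{thm:levy_fourier_approx}--\ref{thm:poly_area_error} combined in the final corollary); your polynomial case in particular reproduces the paper's argument, including the identity~(\ref{eq:relation4legendre}) and the telescoping sum giving $\frac{1}{4(4n+1)}$. The one place where your route genuinely differs is the Kloeden--Platen--Wright case: the paper works directly with the KPW series, observing that the tail terms remain mutually uncorrelated and that each has its variance inflated by the single factor $\E\big[\big(b_k^{(i)}-\frac{1}{k\pi}W_1^{(i)}\big)^2\big]=3\,\E\big[\big(b_k^{(i)}\big)^2\big]$, which yields $\frac{3}{2\pi^2}\sum_{k\geq n}k^{-2}$ in one line; you instead decompose the error as the Fourier tail plus $\widehat{A}_n-\widetilde{A}_n$ and invoke the evaluation $\frac{1}{2}a_0+\sum_{k\geq1}a_k=0$ of~(\ref{eq:bridge_fourier_expansion}) at $t=0$, splitting the constant as $1+2$. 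Both give the same exact error $\frac{3}{2\pi^2}\sum_{k=n}^{\infty}k^{-2}$; the paper's version is shorter, while yours makes visible where the extra $\frac{2}{\pi^2 N}$ comes from (the tail of the $a_k$ themselves), at the cost of having to justify the $t=0$ identity — which you should note can be checked directly in $L^2(\pr)$ from the covariances~(\ref{eq:a02}), (\ref{eq:akal}), (\ref{eq:remaining_covariances}) together with $\zeta(2)=\frac{\pi^2}{6}$, rather than by pointwise evaluation of the random Fourier series. (Also, your ``$2\sum_{k=n}^{\infty}\var(a_k)=1/(\pi^2 n)$'' should read ``$\sim$''; it does not affect the asymptotics.)
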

The asymptotic convergence rates in Theorem~\ref{thm:asmyp_conv_rates_combined} are phrased in terms of $N$ since the number of Gaussian random vectors required to define the above L\'{e}vy area approximations is $N$ or $N-1$, respectively. Of course, it is straightforward to define the polynomial approximation $\widebar{A}_{n}$ for $n\in\N$, see Theorem \ref{thm:poly_area_error}.\medskip

Intriguingly, the convergence rates for the approximations resulting from the Fourier series and the polynomial expansion correspond exactly with the areas under the limit variance function for each fluctuation process, which are
\begin{equation*}
    \int_0^1\frac{1}{\pi^2}\dd t=\frac{1}{\pi^2}
    \quad\text{and}\quad
    \int_0^1 \frac{1}{\pi}\sqrt{t(1-t)}\dd t=\frac{1}{8}\;.
\end{equation*}
We provide heuristics demonstrating how this correspondence arises at the end of Section~\ref{sect:levyarea}.\medskip

By adding an additional Gaussian random matrix that matches the covariance of the tail sum, it is possible to derive high order L\'{e}vy area approximations with $O(N^{-1})$ convergence in $L^2(\mathbb{P})$.
Wiktorsson \cite{Wiktorsson} proposed this approach using the Kloeden--Platen--Wright approximation~(\ref{eq:kpw_fourier_approx_intro}) and this was recently improved by Mrongowius and R{\"{o}}{\ss}ler in~\cite{RecentLevyArea} who use the approximation~(\ref{eq:levy_fourier_approx_intro}) obtained from the Fourier series expansion~(\ref{eq:bridge_fourier_expansion}).\medskip

We expect that an $O(N^{-1})$ polynomial-based approximation is possible using the same techniques. While this approximation should be slightly less accurate than the Fourier approach, we expect it to be easier to implement due to both the independence of the coefficients $\{c_k\}_{k\in\N}$ and the covariance of the tail sum having a closed-form expression, see Theorem~\ref{thm:poly_area_error}. Moreover, this type of method has already been studied in \cite{Davie, Flint, Fosterthesis} with Brownian L\'{e}vy area being approximated by
\begin{equation}\label{eq:cheap_levy_area}
\wideparen{A}_{\m 0,1}^{\m (i,j)} := \frac{1}{2}\left(W_1^{(i)}c_1^{(j)} - c_1^{(i)}W_1^{(j)}\right) + \lambda_{\m 0,1}^{(i,j)}\;,
\end{equation}
where the antisymmetric $d\times d$ matrix $\lambda_{\m 0,1}$ is normally distributed and designed so that $\wideparen{A}_{\m 0,1}$ has the same covariance structure as the Brownian L\'{e}vy area $A_{\m 0,1}$. Davie \cite{Davie} as well as Flint and Lyons~\cite{Flint} generate each $(i,j)$-entry of $\lambda_{0,1}$ independently as $\lambda_{\m 0,1}^{(i,j)} \sim \mathcal{N}\big(0, \frac{1}{12}\big)$ for $i < j\m$. In \cite{Fosterthesis}, it is shown that the covariance structure of $A_{0,1}$ can be explicitly computed conditional on both $W_1$ and $c_1$.
By matching the conditional covariance structure of $A_{\m 0,1}$, the work~\cite{Fosterthesis} obtains the approximation
\begin{equation*}
\lambda_{\m 0,1}^{(i,j)} \sim \mathcal{N}\bigg(0, \frac{1}{20} + \frac{1}{20}\Big(\big(c_1^{(i)}\big)^2 + \big(c_1^{(j)}\big)^2\Big)\bigg)\;,
\end{equation*}
where the entries $\big\{\lambda_{\m 0,1}^{(i,j)}\big\}_{i\m<\m j}$ are still generated independently, but only after $c_1$ has been generated.\medskip

By rescaling (\ref{eq:cheap_levy_area}) to approximate L\'{e}vy area on $\big[\frac{k}{N}, \frac{k+1}{N}\big]$ and summing over $k\in\{0,\dots, N-1\}$, we obtain a fine discretisation of $A_{0,1}$ involving 
$2N$ Gaussian random vectors and $N$ random matrices.
In \cite{Davie, Flint, Fosterthesis}, the L\'{e}vy area of Brownian motion and this approximation are probabilistically coupled in such a way that $L^{2}(\mathbb{P})$ convergence rates of $O(N^{-1})$ can be established. Furthermore, the efficient L\'{e}vy area approximation (\ref{eq:cheap_levy_area}) can be used directly in numerical methods for SDEs, which then achieve $L^{2}(\mathbb{P})$ convergence of $O(N^{-1})$ under certain conditions on the SDE vector fields, see \cite{Davie, Flint}.
We leave such high order polynomial-based approximations of L\'{e}vy area as a topic for future work.

\medskip

\subsection*{The paper is organised as follows.}
In Section~\ref{sect:expansions}, we provide an overview of the three expansions we consider for the Brownian bridge, and we characterise the associated fluctuation processes $(F_t^{N,1})_{t\in[0,1]}$ and $(F_t^{N,2})_{t\in[0,1]}$. Before discussing their behaviour in the limit $N\to\infty$, we initiate the moment analysis used to prove the on-diagonal part of Theorem~\ref{thm:sinflucutation} and we extend the analysis to determine the values of the Riemann zeta function at even positive integers in Section~\ref{sect:zeta}. The proof of Theorem~\ref{thm:sinflucutation} follows in Section~\ref{sect:fluct}, where we complete the moment analysis and establish a locally uniform convergence to identify the limit on the diagonal, before we deduce Corollary~\ref{cor1}, which then allows us to obtain the off-diagonal convergence in Theorem~\ref{thm:sinflucutation}. We close Section~\ref{sect:fluct} by proving Theorem~\ref{thm:fluctuations}. In Section~\ref{sect:levyarea}, we compare the asymptotic convergence rates of the different approximations of L\'{e}vy area, which results in a proof of Theorem~\ref{thm:asmyp_conv_rates_combined}.

\section{Series expansions for the Brownian bridge}
\label{sect:expansions}
We discuss the Karhunen--Lo{\`e}ve expansion as well as the Fourier expansion of the Brownian bridge more closely, and we derive expressions for the covariance functions of their Gaussian fluctuation processes.

\medskip

In our analysis, we frequently use a type of It{\^o} isometry for It{\^o} integrals with respect to a Brownian bridge, and we include its statement and proof for completeness.
\begin{lemma}\label{lem:itoBB}
    Let $(B_t)_{t\in[0,1]}$ be a Brownian bridge in $\R$ with $B_0=B_1=0$, and let $f,g\colon [0,1]\to \R$ be integrable functions. Setting $F(1)=\int_0^1 f(t)\dd t$ and $G(1)=\int_0^1 g(t)\dd t$, we have
    \begin{equation*}
        \E\left[\left(\int_0^1 f(t)\dd B_t\right)\left(\int_0^1 g(t)\dd B_t\right)\right]
        =\int_0^1 f(t)g(t) \dd t - F(1)G(1)\;.
    \end{equation*}
\end{lemma}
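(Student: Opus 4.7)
The plan is to exploit the classical representation of the Brownian bridge as $B_t = W_t - tW_1$, where $(W_t)_{t\in[0,1]}$ is a standard Brownian motion (one quickly checks this gives the covariance~(\ref{cov_BB})). The key preliminary step is to show that, for suitable deterministic integrands,
\begin{equation*}
\int_0^1 f(t)\dd B_t = \int_0^1 f(t)\dd W_t - F(1)\m W_1\;,
\end{equation*}
and likewise for $g$. For $f$ of bounded variation this is immediate from integration by parts applied to both sides (the boundary terms at $t=0,1$ match up because $B_0=B_1=0$), and the general integrable case can be handled by density. Since the lemma is applied in the paper only to smooth functions (trigonometric and polynomial), this mild technicality poses no real obstacle.

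Given this representation, the proof reduces to expanding
\begin{equation*}
\E\left[\left(\int_0^1 f\dd W - F(1)\m W_1\right)\!\left(\int_0^1 g\dd W - G(1)\m W_1\right)\right]
\end{equation*}
into four terms and evaluating each via the standard It{\^o}/Wiener isometry. The ingredients are $\E\bigl[\int_0^1 f\dd W\,\int_0^1 g\dd W\bigr]=\int_0^1 f(t)g(t)\dd t$, $\E\bigl[W_1\int_0^1 f\dd W\bigr]=F(1)$ (isometry against the constant integrand $1$), and $\E[W_1^2]=1$. Combining these, the two cross-terms contribute $-2F(1)G(1)$ while the $W_1^2$ term contributes $+F(1)G(1)$, leaving the claimed expression $\int_0^1 f(t)g(t)\dd t - F(1)G(1)$.

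The main obstacle is one of set-up rather than substance: one has to fix the meaning of the stochastic integral $\int_0^1 f\dd B_t$ and justify passing between integrators $B$ and $W$. Taking the displayed identity above as a working definition, or equivalently proving the result first for simple step functions and extending by an $L^2([0,1])$ approximation, removes the difficulty at once, and the remainder of the argument is a short algebraic manipulation.
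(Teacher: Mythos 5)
Your proposal is correct and follows essentially the same route as the paper: both use the representation $B_t = W_t - tW_1$ to rewrite $\int_0^1 f(t)\dd B_t$ as $\int_0^1 f(t)\dd W_t - F(1)\m W_1$, then expand the product into four terms and evaluate each via the standard It{\^o} isometry. The only difference is that you spend additional (reasonable) care on justifying the change of integrator, which the paper treats as immediate.
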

\begin{proof}
    For a standard one-dimensional Brownian motion $(W_t)_{t\in[0,1]}$, the process
    $(W_t-t W_1)_{t\in[0,1]}$ has the same law as the Brownian bridge $(B_t)_{t\in[0,1]}$. In particular, the random variable $\int_0^1 f(t)\dd B_t$ is equal in law to the random variable
    \begin{equation*}
        \int_0^1 f(t)\dd W_t -W_1\int_0^1f(t)\dd t
        =\int_0^1 f(t)\dd W_t - W_1 F(1)\;.
    \end{equation*}
    Using a similar expression for $\int_0^1 g(t)\dd B_t$ and applying the usual It\^o isometry, we deduce that
    \begin{align*}
        &\E\left[\left(\int_0^1 f(t)\dd B_t\right)\left(\int_0^1 g(t)\dd B_t\right)\right]\\
        &\qquad=\int_0^1 f(t)g(t) \dd t-F(1)\int_0^1 g(t)\dd t
        -G(1)\int_0^1 f(t)\dd t+F(1)G(1)\\
        &\qquad=\int_0^1 f(t)g(t) \dd t - F(1)G(1)\;,
    \end{align*}
    as claimed.
\end{proof}

\subsection{The Karhunen--Lo{\`e}ve expansion}
Mercer's theorem, see~\cite{mercer}, states that for a continuous symmetric non-negative definite kernel $K\colon[0,1]\times[0,1]\to\R$ there exists an orthonormal basis $\{e_k\}_{k\in\N}$ of $L^2([0,1])$ which consists of eigenfunctions of the Hilbert–Schmidt integral operator associated with $K$ and whose eigenvalues $\{\lambda_k\}_{k\in\N}$ are non-negative and such that, for $s,t\in[0,1]$, we have the representation
\begin{equation*}
    K(s,t)=\sum_{k=1}^\infty \lambda_k e_k(s) e_k(t)\;,
\end{equation*}
which converges absolutely and uniformly on $[0,1]\times[0,1]$.
For the covariance function $K_B$ defined by~(\ref{cov_BB}) of the Brownian bridge $(B_t)_{t\in[0,1]}$, we obtain, for $k\in\N$ and $t\in[0,1]$,
\begin{equation*}
    e_k(t)=\sqrt{2}\sin(k\pi t)
    \quad\text{and}\quad
    \lambda_k=\frac{1}{k^2\pi^2}\;.
\end{equation*}
The Karhunen--Lo{\`e}ve expansion of the Brownian bridge is then given by
\begin{equation*}
    B_t=\sum_{k=1}^\infty \sqrt{2}\sin(k\pi t) Z_k
    \quad\text{where}\quad
    Z_k=\int_0^1 \sqrt{2}\sin(k\pi r) B_r \dd r\;,
\end{equation*}
which after integration by parts yields the expression~(\ref{eq:bridge_kl_expansion}).
Applying Lemma~\ref{lem:itoBB}, we can compute the covariance functions of the associated fluctuation processes $(F_t^{N,1})_{t\in[0,1]}$.
\begin{lemma}\label{lem:cov1}
    The fluctuation process $(F_t^{N,1})_{t\in[0,1]}$ for $N\in\N$ is a zero-mean Gaussian process with
    covariance function $NC_1^N$ where $C_1^N\colon[0,1]\times[0,1]\to\R$ is given by
    \begin{equation*}
        C_1^N(s,t)=\min(s,t)-st-
        \sum_{k=1}^N\frac{2\sin(k\pi s)\sin(k\pi t)}{k^2\pi^2}\;.
    \end{equation*}
\end{lemma}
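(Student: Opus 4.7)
The plan is first to note that, since $B_t$ is a Gaussian process and each coefficient $\int_0^1\cos(k\pi r)\dd B_r$ is a Wiener-type integral against the bridge, the family $\{B_t, \int_0^1\cos(k\pi r)\dd B_r : t\in[0,1], k\in\{1,\dots,N\}\}$ is jointly Gaussian. Hence $F_t^{N,1}$, being a finite linear combination of these, is Gaussian; zero mean is immediate since every ingredient has zero mean. The whole task is therefore a covariance computation.

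Next, I would expand
\begin{align*}
\E\bigl[F_s^{N,1}F_t^{N,1}\bigr]
&= N\,\E[B_sB_t]
-N\sum_{k=1}^{N}\frac{2\sin(k\pi s)}{k\pi}\,\E\!\left[B_t\!\int_0^1\!\cos(k\pi r)\dd B_r\right]\\
&\quad-N\sum_{k=1}^{N}\frac{2\sin(k\pi t)}{k\pi}\,\E\!\left[B_s\!\int_0^1\!\cos(k\pi r)\dd B_r\right]\\
&\quad+N\sum_{j,k=1}^{N}\frac{4\sin(j\pi s)\sin(k\pi t)}{jk\pi^2}\,\E\!\left[\int_0^1\!\cos(j\pi r)\dd B_r\!\int_0^1\!\cos(k\pi r)\dd B_r\right].
\end{align*}
The first term is $N(\min(s,t)-st)$ from the covariance~(\ref{cov_BB}).

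Then I would evaluate all remaining expectations using Lemma~\ref{lem:itoBB}. Writing $B_t=\int_0^1 \ind_{[0,t]}(r)\dd B_r$ and taking $f=\ind_{[0,t]}$, $g=\cos(k\pi\cdot)$ in Lemma~\ref{lem:itoBB}, the mean-value term vanishes because $\int_0^1\cos(k\pi r)\dd r=0$ for $k\geq 1$, leaving $\E[B_t\int_0^1\cos(k\pi r)\dd B_r]=\int_0^t\cos(k\pi r)\dd r=\sin(k\pi t)/(k\pi)$, and similarly for $s$. For the double sum, Lemma~\ref{lem:itoBB} with $f=\cos(j\pi\cdot)$, $g=\cos(k\pi\cdot)$ reduces everything to $\int_0^1\cos(j\pi r)\cos(k\pi r)\dd r$, and the standard orthogonality of cosines on $[0,1]$ yields $\tfrac{1}{2}\delta_{jk}$ for $j,k\geq 1$.

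Substituting these four ingredients, the two cross terms contribute $-2N\sum_{k=1}^N \tfrac{2\sin(k\pi s)\sin(k\pi t)}{k^2\pi^2}$ and the diagonal double sum contributes $+N\sum_{k=1}^N \tfrac{2\sin(k\pi s)\sin(k\pi t)}{k^2\pi^2}$, so the three sine-series terms collapse to $-N\sum_{k=1}^N \tfrac{2\sin(k\pi s)\sin(k\pi t)}{k^2\pi^2}$. Combined with $N(\min(s,t)-st)$, this is exactly $NC_1^N(s,t)$, completing the proof. The step requiring the most care is tracking the $2$'s and the factor of $\tfrac{1}{2}$ from $\int_0^1\cos^2(k\pi r)\dd r$ so that the cross terms and the diagonal term combine correctly; no genuine obstacle arises.
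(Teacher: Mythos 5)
Your proposal is correct and follows essentially the same route as the paper: both reduce the claim to computing $\E\big[B_t\int_0^1\cos(k\pi r)\dd B_r\big]=\sin(k\pi t)/(k\pi)$ and $\E\big[\int_0^1\cos(j\pi r)\dd B_r\int_0^1\cos(k\pi r)\dd B_r\big]=\tfrac{1}{2}\delta_{jk}$ via Lemma~\ref{lem:itoBB} and then expanding the covariance of $F_s^{N,1}F_t^{N,1}$. Your bookkeeping of the cross terms and the diagonal term is accurate, so nothing is missing.
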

\begin{proof}
    From the definition~(\ref{KL_fluctuation}), we see that $(F_t^{N,1})_{t\in[0,1]}$ is a zero-mean Gaussian process. Hence, it suffices to determine its covariance function. By Lemma~\ref{lem:itoBB}, we have, for $k,l\in\N$,
    \begin{equation*}
        \E\left[\left(\int_0^1\cos(k\pi r)\dd B_r\right)\left(\int_0^1\cos(l\pi r)\dd B_r\right)\right]
        =\int_0^1\cos(k\pi r)\cos(l\pi r)\dd r=
        \begin{cases}
        \frac{1}{2} & \text{if }k=l\\
        0 & \text{otherwise}
        \end{cases}
    \end{equation*}
    and, for $t\in[0,1]$,
    \begin{equation*}
        \E\left[B_t\int_0^1\cos(k\pi r)\dd B_r\right]
        =\int_0^t\cos(k\pi r)\dd r=\frac{\sin(k\pi t)}{k\pi}\;.
    \end{equation*}
    Therefore, from~(\ref{cov_BB}) and (\ref{KL_fluctuation}), we obtain that, for all $s,t\in[0,1]$,
    \begin{equation*}
        \E\left[F_s^{N,1}F_t^{N,1}\right]
        =N\left(\min(s,t)-st -\sum_{k=1}^N\frac{2\sin(k\pi s)\sin(k\pi t)}{k^2\pi^2}\right)\;,
    \end{equation*}
    as claimed.
\end{proof}
Consequently, Theorem~\ref{thm:sinflucutation} is a statement about the pointwise convergence of the function $NC_1^N$ in the limit $N\to\infty$.

\medskip

For our stand-alone derivation of the values of the Riemann zeta function at even positive integers in Section~\ref{sect:zeta}, it is further important to note that since, by Mercer's theorem, the representation
\begin{equation}\label{eq:mercer4bridge}
    K_B(s,t)=\min(s,t)-st=\sum_{k=1}^\infty \frac{2\sin(k\pi s)\sin(k\pi t)}{k^2\pi^2}
\end{equation}
converges uniformly for $s,t\in[0,1]$, the sequence $\{C_1^N\}_{N\in\N}$ converges uniformly on $[0,1]\times[0,1]$ to the zero function. It follows that, for all $n\in\N_0$,
\begin{equation}\label{eq:conv_of_moments_4_C1}
    \lim_{N\to\infty}\int_0^1 C_1^N(t,t) t^n\dd t = 0\;.
\end{equation}

\subsection{The Fourier expansion}
Whereas for the Karhunen--Lo{\`e}ve expansion the sequence
\begin{equation*}
    \left\{\int_0^1\cos(k\pi r)\dd B_r\right\}_{k\in\N}
\end{equation*}
of random coefficients is formed by independent Gaussian random variables, it is crucial to observe that the random coefficients appearing in the Fourier expansion are not independent.
Integrating by parts, we can rewrite the coefficients defined in~(\ref{eq:fourier_coefficients}) as
\begin{equation}\label{eq:a0}
    a_0=2\int_0^1B_r\dd r=-2\int_0^1r\dd B_r
    \quad\text{and}\quad b_0=0
\end{equation}
as well as, for $k\in\N$,
\begin{equation}\label{eq:akbk}
    a_k=-\int_0^1\frac{\sin(2k\pi r)}{k\pi}\dd B_r
    \quad\text{and}\quad
    b_k=\int_0^1\frac{\cos(2k\pi r)}{k\pi}\dd B_r\;.
\end{equation}
Applying Lemma~\ref{lem:itoBB}, we see that
\begin{equation}\label{eq:a02}
    \E\left[a_0^2\right]=4\left(\int_0^1 r^2\dd r-\frac{1}{4}\right)=\frac{1}{3}
\end{equation}
and, for $k,l\in\N$,
\begin{equation}\label{eq:akal}
    \E\left[a_k a_l\right]=\E\left[b_k b_l\right]=
    \begin{cases}
    \dfrac{1}{2k^2\pi^2} & \text{if }k=l\\[6pt]
    0 & \text{otherwise}
    \end{cases}\;.
\end{equation}
Since the random coefficients are Gaussian random variables with mean zero, by~(\ref{eq:a0}) and (\ref{eq:akbk}), this implies that, for $k\in\N$,
\begin{equation*}
    a_0\sim\mathcal{N}\left(0,\frac{1}{3}\right)
    \quad\text{and}\quad
    a_k,b_k\sim\mathcal{N}\left(0,\frac{1}{2k^2\pi^2}\right)\;.
\end{equation*}
For the remaining covariances of these random coefficients, we obtain that, for $k,l\in\N$,
\begin{equation}\label{eq:remaining_covariances}
    \E\left[a_k b_l\right]=0\;,\quad
    \E\left[a_0a_k\right]=2\int_0^1\frac{\sin(2k\pi r)}{k\pi}r\dd r=-\frac{1}{k^2\pi^2}
    \quad\text{and}\quad
    \E\left[a_0b_k\right]=0\;.
\end{equation}
Using the covariance structure of the random coefficients, we determine the covariance functions of the fluctuation processes $(F_t^{N,2})_{t\in[0,1]}$ defined in~(\ref{fourier_fluctuation}) for the Fourier series expansion.
\begin{lemma}\label{lem:cov2}
    The fluctuation process $(F_t^{N,2})_{t\in[0,1]}$ for $N\in\N$ is a Gaussian process with mean zero and whose covariance function is $2NC_2^N$ where $C_2^N\colon[0,1]\times[0,1]$ is given by
    \begin{equation*}
        C_2^N(s,t)=\min(s,t)-st+\frac{s^2-s}{2}+\frac{t^2-t}{2}+\frac{1}{12}- \sum_{k=1}^N\frac{\cos(2k\pi(t-s))}{2k^2\pi^2}\;.
\end{equation*}
\end{lemma}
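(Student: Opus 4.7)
The plan is straightforward once one recognises that $F_t^{N,2}$ is, by its very definition~\eqref{fourier_fluctuation}, a (finite) linear functional of the jointly Gaussian family $\{B_u : u \in [0,1]\} \cup \{a_0\} \cup \{a_k, b_k : 1\le k\le N\}$. Hence $(F_t^{N,2})_{t\in[0,1]}$ is automatically a centred Gaussian process, and the task reduces to computing $\E[F_s^{N,2}F_t^{N,2}]$ and simplifying it into the stated form $2NC_2^N(s,t)$.

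The bookkeeping splits into three ingredients. First, the covariances among the Fourier coefficients $a_0, a_k, b_k$ were already assembled in~\eqref{eq:a02}, \eqref{eq:akal} and~\eqref{eq:remaining_covariances}. Second, we need the cross covariances between $B_s$ and each coefficient. Using the representations in~\eqref{eq:a0} and~\eqref{eq:akbk} together with Lemma~\ref{lem:itoBB} applied to $f = \mathbf{1}_{[0,s]}$ (with a suitable approximation) or, more transparently, by writing $B_s = \int_0^1(\mathbf{1}_{[0,s]}(r) - s)\dd W_r$ under $B \stackrel{d}{=} W - \cdot W_1$, the cross terms read
\begin{equation*}
\E[B_s a_0] = s(1-s), \quad \E[B_s a_k] = \frac{\cos(2k\pi s)-1}{2k^2\pi^2}, \quad \E[B_s b_k] = \frac{\sin(2k\pi s)}{2k^2\pi^2}.
\end{equation*}
Third, $\E[B_s B_t]$ is just $K_B(s,t)=\min(s,t)-st$.

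With these in hand, I would expand
\begin{equation*}
\frac{1}{2N}\E[F_s^{N,2}F_t^{N,2}] = \E[B_sB_t] - \tfrac{1}{2}(\E[B_s a_0]+\E[B_t a_0]) + \tfrac{1}{4}\E[a_0^2] + (\text{cross sums}) + (\text{double sums})
\end{equation*}
and collect terms according to which trigonometric identity they feed into. The constant and polynomial contributions yield exactly $\min(s,t)-st+\frac{s^2-s}{2}+\frac{t^2-t}{2}+\frac{1}{12}$. For the oscillatory part, the $B$--$a_k$ and $B$--$b_k$ cross sums combine via $\cos(2k\pi s)\cos(2k\pi t)+\sin(2k\pi s)\sin(2k\pi t)=\cos(2k\pi(t-s))$ and contribute $-\sum_{k=1}^N k^{-2}\pi^{-2}\cos(2k\pi(t-s))$ together with extra single-argument cosine terms; those extra terms are cancelled exactly by the $a_0$--$a_k$ cross sums, which carry the coefficient $\E[a_0a_k]=-1/(k^2\pi^2)$. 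Finally, the double sum collapses to the diagonal $k=l$ by~\eqref{eq:akal} and~\eqref{eq:remaining_covariances} and yields $+\sum_{k=1}^N (2k^2\pi^2)^{-1}\cos(2k\pi(t-s))$. Adding these gives the single surviving term $-\sum_{k=1}^N(2k^2\pi^2)^{-1}\cos(2k\pi(t-s))$, which is precisely what is needed.

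There is no real obstacle beyond careful accounting; the cleanest route is to organise the computation into a table indexed by pairs $(\cdot,\cdot)$ of the seven families $B_\bullet, a_0, a_k, b_k$ appearing in $F^{N,2}$ and to verify the cancellations described above. The only mild subtlety is justifying $\E[B_s a_0] = s(1-s)$ and the formulas for $\E[B_s a_k], \E[B_s b_k]$ without circularity; the cleanest way is to invoke $B_s=W_s-sW_1$ and standard It\^{o} isometry rather than trying to force Lemma~\ref{lem:itoBB} directly on the indicator $\mathbf{1}_{[0,s]}$.
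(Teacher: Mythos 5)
Your proposal is correct and follows essentially the same route as the paper: compute all pairwise covariances via Lemma~\ref{lem:itoBB} (the paper obtains exactly your values $\E[B_s a_0]=s-s^2$, $\E[B_s a_k]=\frac{\cos(2k\pi s)-1}{2k^2\pi^2}$, $\E[B_s b_k]=\frac{\sin(2k\pi s)}{2k^2\pi^2}$), expand the product, and collapse the oscillatory terms with $\cos(2k\pi(t-s))=\cos(2k\pi s)\cos(2k\pi t)+\sin(2k\pi s)\sin(2k\pi t)$, with the same cancellation of the single-argument cosines against the $a_0 a_k$ covariances. Your worry about applying Lemma~\ref{lem:itoBB} to $\ind_{[0,s]}$ is unnecessary, since that lemma only requires integrability and the paper uses it in exactly this way.
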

\begin{proof}
    Repeatedly applying Lemma~\ref{lem:itoBB}, we compute that, for $t\in[0,1]$,
    \begin{equation}\label{eq:Bta0}
        \E\left[B_ta_0\right]=-2\int_0^t r\dd r+\int_0^t\dd r=t-t^2
    \end{equation}
    as well as, for $k\in\N$,
    \begin{equation}\label{eq:BtakBtbk}
        \E\left[B_ta_k\right]=-\int_0^t\frac{\sin(2k\pi r)}{k\pi}\dd r
        =\frac{\cos(2k\pi t)-1}{2k^2\pi^2}
        \quad\text{and}\quad
        \E\left[B_tb_k\right]=\frac{\sin(2k\pi t)}{2k^2\pi^2}\;.
    \end{equation}
    From~(\ref{eq:a02}) and (\ref{eq:Bta0}), it follows that, for $s,t\in[0,1]$,
    \begin{equation*}
        \E\left[\left(B_s-\frac{1}{2}a_0\right)\left(B_t-\frac{1}{2}a_0\right)\right]
        =\min(s,t)-st+\frac{s^2-s}{2}+\frac{t^2-t}{2}+\frac{1}{12}\;,
    \end{equation*}
    whereas~(\ref{eq:remaining_covariances}) and (\ref{eq:BtakBtbk}) imply that
    \begin{equation*}
        \E\left[\frac{1}{2}a_0\sum_{k=1}^N a_k \cos(2k\pi t) -B_s\sum_{k=1}^N a_k \cos(2k\pi t)\right]
        =-\sum_{k=1}^N\frac{\cos(2k\pi s)\cos(2k\pi t)}{2k^2\pi^2}
    \end{equation*}
    as well as
    \begin{equation*}
        \E\left[B_s\sum_{k=1}^N b_k \sin(2k\pi t)\right]
        =\sum_{k=1}^N\frac{\sin(2k\pi s)\sin(2k\pi t)}{2k^2\pi^2}\;.
    \end{equation*}
    It remains to observe that, by~(\ref{eq:akal}) and (\ref{eq:remaining_covariances}),
    \begin{align*}
        &\E\left[\left(\sum_{k=1}^N\left(a_k\cos(2k\pi s)+b_k\sin(2k\pi s)\right)\right) \left(\sum_{k=1}^N\left(a_k\cos(2k\pi t)+b_k\sin(2k\pi t)\right)\right)\right]\\
        &\qquad=
        \sum_{k=1}^N\frac{\cos(2k\pi s)\cos(2k\pi t)+\sin(2k\pi s)\sin(2k\pi t)}{2k^2\pi^2}\;.
    \end{align*}
    Using the identity
    \begin{equation}\label{eq:trig_identity}
        \cos(2k\pi (t-s))=\cos(2k\pi s)\cos(2k\pi t)+\sin(2k\pi s)\sin(2k\pi t)
    \end{equation}
    and recalling the definition~(\ref{fourier_fluctuation}) of the fluctuation process $(F_t^{N,2})_{t\in[0,1]}$ for the Fourier expansion, we obtain the desired result.
\end{proof}
By combining Corollary~\ref{cor1}, the resolution~(\ref{eq:basel}) to the Basel problem and the representation~(\ref{eq:mercer4bridge}), we can determine the pointwise limit of $2N C_2^N$ as $N\to\infty$.
We leave further considerations until Section~\ref{sect:fluct2} to demonstrate that the identity~(\ref{eq:basel}) is really a consequence of our analysis.

\subsection{The polynomial expansion}
\label{sect:poly}
As pointed out in the introduction and as discussed in detail in~\cite{foster}, the polynomial expansion of the Brownian bridge is a type of Karhunen--Lo{\`e}ve expansion in the weighted $L^2(\pr)$ space with weight function $w$ on $(0,1)$ defined by $w(t)=\frac{1}{t(1-t)}$.\medskip

An alternative derivation of the polynomial expansion is given in~\cite{semicircle} by considering iterated Kolmogorov diffusions. The iterated Kolmogorov diffusion of step $N\in\N$ pairs a one-dimensional Brownian motion $(W_t)_{t\in[0,1]}$ with its first $N-1$ iterated time integrals, that is, it is the stochastic process in $\R^N$ of the form
\begin{equation*}
    \left(W_t,\int_0^t W_{s_1}\dd s_1,\dots,
    \int_0^t\int_0^{s_{N-1}}\dots \int_0^{s_2} W_{s_1}\dd s_1\dots\dd s_{N-1}\right)_{t\in[0,1]}\;.
\end{equation*}
The shifted Legendre polynomial $Q_k$ of degree $k\in\N$ on the interval $[0,1]$ is defined in terms of the standard Legendre polynomial $P_k$ of degree $k$ on $[-1,1]$ by, for $t\in[0,1]$,
\begin{equation*}
    Q_k(t)=P_k(2t-1)\;.
\end{equation*}
It is then shown that the first component of an iterated Kolmogorov diffusion of step $N\in\N$ conditioned to return to $0\in\R^N$ in time $1$ has the same law as the stochastic process
\begin{equation*}
    \left(B_t-\sum_{k=1}^{N-1}(2k+1)\int_0^tQ_k(r)\dd r \int_0^1Q_k(r)\dd B_r\right)_{t\in[0,1]}\;.
\end{equation*}
The polynomial expansion~(\ref{eq:bridge_poly_expansion}) is an immediate consequence of the result~\cite[Theorem~1.4]{semicircle} which states
that these first components of the conditioned iterated Kolmogorov diffusions converge weakly as $N\to\infty$ to the zero process.\medskip

As for the Karhunen--Lo{\`e}ve expansion discussed above, the sequence $\{c_k\}_{k\in\N}$ of random coefficients defined by~(\ref{eq:poly_coefficients}) is again formed by independent Gaussian random variables. To see this, we first recall the following identities for Legendre polynomials~\cite[(12.23), (12.31), (12.32)]{ArfkenWeber} which in terms of the shifted Legendre polynomials read as, for $k\in\N$,
\begin{equation}\label{eq:id_shiftedLP}
Q_k = \frac{1}{2(2k+1)}\left(Q_{k+1}^{\prime} - Q_{k-1}^{\prime}\right)\;,\qquad
Q_k(0) = (-1)^k\;,\qquad
Q_k(1) = 1\;.
\end{equation}
In particular, it follows that, for all $k\in\N$,
\begin{equation*}
    \int_0^1 Q_k(r)\dd r=0\;,
\end{equation*}
which, by Lemma~\ref{lem:itoBB}, implies that, for $k,l\in\N$,
\begin{equation*}
    \E\left[c_k c_l\right]
    =\E\left[\left(\int_0^1 Q_k(r)\dd B_r\right)\left(\int_0^1 Q_l(r)\dd B_r\right)\right]
    =\int_0^1 Q_k(r) Q_l(r)\dd r=
    \begin{cases}
    \dfrac{1}{2k+1} & \text{if } k=l\\[6pt]
    0 & \text{otherwise}
    \end{cases}\;.
\end{equation*}
Since the random coefficients are Gaussian with mean zero, this establishes their independence.

\medskip

The fluctuation processes $(F_t^{N,3})_{t\in[0,1]}$ for the polynomial expansion defined by
\begin{equation}\label{poly_fluctuation}
    F_t^{N,3}=\sqrt{N}\left(B_t-\sum_{k=1}^{N-1}(2k+1) \int_0^t Q_k(r)\dd r \int_0^1 Q_k(r)\dd B_r\right)
\end{equation}
are studied in~\cite{semicircle}. According to~\cite[Theorem 1.6]{semicircle}, they converge in finite dimensional distributions as $N\to\infty$ to the collection $(F_t^3)_{t\in[0,1]}$ of independent Gaussian random variables with mean zero and variance
\begin{equation*}
\E\left[\left(F_t^3\right)^2\right]=\frac{1}{\pi}\sqrt{t(1-t)}\;,
\end{equation*}
that is, the variance function of the limit fluctuations is given by a scaled semicircle.

\section{Particular values of the Riemann zeta function}\label{sect:zeta}
We demonstrate how to use the Karhunen--Lo{\`e}ve expansion of the Brownian bridge or, more precisely, the series representation arising from Mercer's theorem for the covariance function of the Brownian bridge to determine the values of the Riemann zeta function at even positive integers. The analysis further feeds directly into Section~\ref{sect:fluct1} where we characterise the limit fluctuations for the Karhunen--Lo{\`e}ve expansion.

\medskip

The crucial ingredient is the observation~(\ref{eq:conv_of_moments_4_C1}) from Section~\ref{sect:expansions}, which implies that, for all $n\in\N_0$,
\begin{equation}\label{eq:moment_relation}
    \sum_{k=1}^\infty\int_0^1 \frac{2\left(\sin(k\pi t)\right)^2}{k^2\pi^2}t^{n} \dd t
    =\int_0^1\left(t-t^2\right)t^{n}\dd t
    =\frac{1}{(n+2)(n+3)}\;.
\end{equation}
For completeness, we recall that the Riemann zeta function $\zeta\colon\C\setminus\{1\}\to\C$
analytically continues the sum of the Dirichlet series
\begin{equation*}
  \zeta(s)=\sum_{k=1}^\infty\frac{1}{k^s}\;.
\end{equation*}
When discussing its values at even positive integers, we encounter the Bernoulli numbers. The Bernoulli numbers $B_n$, for $n\in\N$, are signed rational numbers defined by an exponential generating function via, for $t\in(-2\pi,2\pi)$,
\begin{equation*}
    \frac{t}{\e^t-1}=1+\sum_{n=1}^\infty\frac{B_n t^n}{n!}\;,
\end{equation*}
see Borevich and Shafarevich~\cite[Chapter~5.8]{zeta_bernoulli}. These numbers play an important role in number theory and analysis. For instance, they feature in the series expansion of the (hyperbolic) tangent and the (hyperbolic) cotangent, and they appear in formulae by Bernoulli and by Faulhaber for the sum of positive integer powers of the first $k$ positive integers. The characterisation of the Bernoulli numbers which is essential to our analysis is that, according to~\cite[Theorem~5.8.1]{zeta_bernoulli}, they satisfy and are uniquely given by the recurrence relations
\begin{equation}\label{eq:recurrence_bernoulli}
    1+\sum_{n=1}^{m}\binom{m+1}{n}B_n=0\quad\text{for }m\in\N\;.
\end{equation}
In particular, choosing $m=1$ yields $1+2B_1=0$, which shows that
\begin{equation*}
    B_{1}=-\frac{1}{2}\;.
\end{equation*}
Moreover, since the function defined by, for $t\in(-2\pi,2\pi)$,
\begin{equation*}
    \frac{t}{\e^t-1}+\frac{t}{2}=1+\sum_{n=2}^\infty\frac{B_n t^n}{n!}
\end{equation*}
is an even function, we obtain $B_{2n+1}=0$ for all $n\in\N$, see~\cite[Theorem~5.8.2]{zeta_bernoulli}.
It follows from~(\ref{eq:recurrence_bernoulli}) that the Bernoulli numbers $B_{2n}$ indexed by even positive integers are uniquely characterised by the recurrence relations
\begin{equation}\label{eq:char_bernoulli_2n}
    \sum_{n=1}^m \binom{2m+1}{2n}B_{2n}=\frac{2m-1}{2}\quad\text{for }m\in\N\;.
\end{equation}
These recurrence relations are our tool for identifying the Bernoulli numbers when determining the values of the Riemann zeta function at even positive integers.

\medskip

The starting point for our analysis is~(\ref{eq:moment_relation}), and we first illustrate how it allows us to compute $\zeta(2)$. Taking $n=0$ in~(\ref{eq:moment_relation}), multiplying through by $\pi^2$, and using that $\int_0^1\left(\sin(k\pi t)\right)^2\dd t=\frac{1}{2}$ for $k\in\N$, we deduce that
\begin{equation*}
    \zeta(2)=\sum_{k=1}^\infty\frac{1}{k^2}
    =\sum_{k=1}^\infty\int_0^1 \frac{2\left(\sin(k\pi t)\right)^2}{k^2} \dd t
    =\frac{\pi^2}{6}\;.
\end{equation*}
We observe that this is exactly the identity obtained by applying the general result
\begin{equation*}
    \int_0^1 K(t,t)\dd t=\sum_{k=1}^\infty \lambda_k
\end{equation*}
for a representation arising from Mercer's theorem to the representation for the covariance function $K_B$ of the Brownian bridge.

\medskip

For working out the values for the remaining even positive integers, we iterate over the degree of the moment in~(\ref{eq:moment_relation}). While for the remainder of this section it suffices to only consider the even moments, we derive the following recurrence relation and the explicit expression both for the even and for the odd moments as these are needed in Section~\ref{sect:fluct1}.
For $k\in\N$ and $n\in\N_0$, we set
\begin{equation*}
    e_{k,n}=\int_0^1 2\left(\sin(k\pi t)\right)^2t^{n}\dd t\;.
\end{equation*}
\begin{lemma}\label{lem:moment_recurrence}
    For all $k\in\N$ and all $n\in\N$ with $n\geq 2$, we have
    \begin{equation*}
        e_{k,n}=\frac{1}{n+1}-\frac{n(n-1)}{4k^2\pi^2}e_{k,n-2}
    \end{equation*}
    subject to the initial conditions
    \begin{equation*}
        e_{k,0}=1\quad\text{and}\quad e_{k,1}=\frac{1}{2}\;.
    \end{equation*}
\end{lemma}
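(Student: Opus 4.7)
The plan is to eliminate the $\sin^{2}$ factor via the double angle identity $2\sin^{2}(k\pi t)=1-\cos(2k\pi t)$, which converts the quantity of interest into
\begin{equation*}
    e_{k,n}=\int_0^1 t^n\dd t-\int_0^1 t^n\cos(2k\pi t)\dd t=\frac{1}{n+1}-I_{k,n}\;,
\end{equation*}
where $I_{k,n}:=\int_0^1 t^n\cos(2k\pi t)\dd t$. All the content of the lemma then lives in the recursive behaviour of $I_{k,n}$, and the initial conditions reduce to $I_{k,0}=0$ (since $\sin$ vanishes at $0$ and at $2k\pi$) and $I_{k,1}=0$ (a single integration by parts), giving $e_{k,0}=1$ and $e_{k,1}=\tfrac{1}{2}$.

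For the recurrence I would integrate $I_{k,n}$ by parts twice, differentiating the polynomial factor each time. The first step, with $u=t^n$ and $\dd v=\cos(2k\pi t)\dd t$, kills the boundary term because $\sin(2k\pi)=\sin(0)=0$, leaving
\begin{equation*}
    I_{k,n}=-\frac{n}{2k\pi}\int_0^1 t^{n-1}\sin(2k\pi t)\dd t\;.
\end{equation*}
A second integration by parts, taking $u=t^{n-1}$ and $\dd v=\sin(2k\pi t)\dd t$, produces a surviving boundary contribution at $t=1$ and reduces the polynomial degree from $n-1$ to $n-2$, which after simplification gives
\begin{equation*}
    I_{k,n}=\frac{n}{4k^2\pi^2}-\frac{n(n-1)}{4k^2\pi^2}I_{k,n-2}\;.
\end{equation*}

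Feeding back the identity $I_{k,n-2}=\frac{1}{n-1}-e_{k,n-2}$, the two contributions of $\frac{n}{4k^2\pi^2}$ cancel exactly, and substituting the resulting expression into $e_{k,n}=\frac{1}{n+1}-I_{k,n}$ yields the claimed recurrence. The argument presents no conceptual obstacle; the only step requiring care is the bookkeeping of signs and of the boundary contributions across the two integrations by parts, together with the clean cancellation that occurs when $I_{k,n-2}$ is re-expressed in terms of $e_{k,n-2}$.
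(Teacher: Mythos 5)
Your proposal is correct, and it is essentially the paper's argument: both proofs perform two integrations by parts to lower the power of $t$ by two, the only cosmetic difference being that you split off the polynomial part via $2\sin^2(k\pi t)=1-\cos(2k\pi t)$ at the outset, whereas the paper carries the same identity implicitly through the antiderivative $t-\frac{\sin(2k\pi t)}{2k\pi}$. All your boundary terms, signs and the final cancellation check out.
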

\begin{proof}
    For $k\in\N$, the values for $e_{k,0}$ and $e_{k,1}$ can be verified directly. For $n\in\N$ with $n\geq 2$, we integrate by parts twice to obtain
    \begin{align*}
        e_{k,n} &=\int_0^1 2\left(\sin(k\pi t)\right)^2t^{n}\dd t\\
        &=1-\int_0^1\left(t-\frac{\sin(2k\pi t)}{2k\pi}\right) nt^{n-1}\dd t\\
        &=1-\frac{n}{2}+\frac{n(n-1)}{2} \int_0^1\left(t^2-
          \frac{\left(\sin(k\pi t)\right)^2}{k^2\pi^2}\right)t^{n-2}\dd t \\
        &=\frac{2-n}{2}+\frac{n(n-1)}{2}
          \left(\frac{1}{n+1}-\frac{1}{2k^2\pi^2}e_{k,n-2}\right)\\
        &=\frac{1}{n+1}-\frac{n(n-1)}{4k^2\pi^2}e_{k,n-2}\;,
  \end{align*}
  as claimed.
\end{proof}
Iteratively applying the recurrence relation, we find the following explicit expression, which despite its involvedness is exactly what we need.
\begin{lemma}\label{lem:moment_expression}
  For all $k\in\N$ and $m\in\N_0$, we have
  \begin{align*}
    e_{k,2m}&=\frac{1}{2m+1}+
    \sum_{n=1}^m\frac{(-1)^n(2m)!}{(2(m-n)+1)!2^{2n}}\frac{1}{k^{2n}\pi^{2n}}
    \quad\text{and}\\
    e_{k,2m+1}&=\frac{1}{2m+2}+
    \sum_{n=1}^m\frac{(-1)^n(2m+1)!}{(2(m-n)+2)!2^{2n}}\frac{1}{k^{2n}\pi^{2n}}\;.
  \end{align*}
\end{lemma}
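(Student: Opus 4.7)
I would prove both identities by induction on $m$, running the even-moment and odd-moment statements in parallel, since each obeys the recurrence of Lemma~\ref{lem:moment_recurrence} with only the initial condition differing. The base case $m=0$ reduces to $e_{k,0}=1$ and $e_{k,1}=\tfrac{1}{2}$ respectively, with the sums over $n$ being empty, which matches the initial conditions of the recurrence.

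For the inductive step on the even moments, assume the formula for $e_{k,2m}$ and apply the recurrence in the form
\begin{equation*}
e_{k,2m+2}=\frac{1}{2m+3}-\frac{(2m+2)(2m+1)}{4k^{2}\pi^{2}}\m e_{k,2m}\;.
\end{equation*}
The constant $1/(2m+1)$ coming from the inductive formula, multiplied by the prefactor, yields $-(2m+2)/(4k^{2}\pi^{2})$, which one checks is exactly the $n=1$ term in the claimed expression for $e_{k,2(m+1)}$. For the remaining summands, the prefactor converts $(2m)!$ into $(2m+2)!$ (the factor $(2m+2)(2m+1)$ being absorbed into the factorial), increases the power of $1/(k^{2}\pi^{2})$ by one, and flips the sign; shifting the dummy index from $n$ to $n-1$ then turns the range $1\le n\le m$ into $2\le n\le m+1$, and the factorial $(2(m-n)+1)!$ becomes $(2((m+1)-n)+1)!$, which is precisely what the target formula at level $m+1$ requires. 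Adding back the leading $1/(2m+3)$ completes the match.

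The odd-moment identity is handled by the same argument, with $(2m)!$ replaced throughout by $(2m+1)!$ and $(2(m-n)+1)!$ replaced by $(2(m-n)+2)!$; since the recurrence does not mix even and odd moments, the two parts of the lemma truly decouple. The only real obstacle here is keeping track of the factorial indices under the index shift, but once one verifies that the $n=1$ term is produced by the isolated constant and that the prefactor $(2m+2)(2m+1)$ telescopes correctly into the ascending factorial, the induction becomes mechanical.
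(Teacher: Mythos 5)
Your proposal is correct and follows essentially the same route as the paper: induction on $m$ with the empty-sum base case $e_{k,0}=1$, $e_{k,1}=\tfrac12$, substitution of the inductive formula into the recurrence of Lemma~\ref{lem:moment_recurrence}, identification of the isolated constant's contribution as the $n=1$ term, and an index shift that absorbs $(2m+2)(2m+1)$ into the factorial. The parallel treatment of the even and odd cases, which decouple under the recurrence, is exactly how the paper's induction step proceeds.
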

\begin{proof}
  We proceed by induction over $m$. Since $e_{k,0}=1$ and $e_{k,1}=\frac{1}{2}$ for all $k\in\N$, the expressions are true for $m=0$ with the sums being understood as empty sums in this case. Assuming that the result is true for some fixed $m\in\N_0$, we use Lemma~\ref{lem:moment_recurrence} to deduce that
  \begin{align*}
    e_{k,2m+2}&=\frac{1}{2m+3}-\frac{(2m+2)(2m+1)}{4k^2\pi^2}e_{k,2m}\\
    &=\frac{1}{2m+3}-\frac{2m+2}{4k^2\pi^2}-
      \sum_{n=1}^m\frac{(-1)^n(2m+2)!}{(2(m-n)+1)!2^{2n+2}}\frac{1}{k^{2n+2}\pi^{2n+2}}\\
    &=\frac{1}{2m+3}+
      \sum_{n=1}^{m+1}\frac{(-1)^n(2m+2)!}{(2(m-n)+3)!2^{2n}}\frac{1}{k^{2n}\pi^{2n}}    
  \end{align*}
  as well as
  \begin{align*}
    e_{k,2m+3}&=\frac{1}{2m+4}-\frac{(2m+3)(2m+2)}{4k^2\pi^2}e_{k,2m+1}\\
    &=\frac{1}{2m+4}-\frac{2m+3}{4k^2\pi^2}-
      \sum_{n=1}^m\frac{(-1)^n(2m+3)!}{(2(m-n)+2)!2^{2n+2}}\frac{1}{k^{2n+2}\pi^{2n+2}}\\
    &=\frac{1}{2m+4}+
      \sum_{n=1}^{m+1}\frac{(-1)^n(2m+3)!}{(2(m-n)+4)!2^{2n}}\frac{1}{k^{2n}\pi^{2n}}\;,
  \end{align*}
  which settles the induction step.
\end{proof}
Focusing on the even moments for the remainder of this section, we see that by~(\ref{eq:moment_relation}), for all $m\in\N_0$,
\begin{equation*}
    \sum_{k=1}^\infty\frac{e_{k,2m}}{k^2\pi^2}
    =\frac{1}{(2m+2)(2m+3)}\;.
\end{equation*}
From Lemma~\ref{lem:moment_expression}, it follows that
\begin{equation*}
    \sum_{k=1}^\infty\frac{1}{k^2\pi^2}
    \left(\sum_{n=0}^m\frac{(-1)^n(2m)!}{(2(m-n)+1)!2^{2n}}\frac{1}{k^{2n}\pi^{2n}}\right)
    =\frac{1}{(2m+2)(2m+3)}\;.
\end{equation*}
Since $\sum_{k=1}^\infty k^{-2n}$ converges for all $n\in\N$, we can rearrange sums to obtain
\begin{equation*}
    \sum_{n=0}^m\frac{(-1)^n(2m)!}{(2(m-n)+1)!2^{2n}}
    \left(\sum_{k=1}^\infty\frac{1}{k^{2n+2}\pi^{2n+2}}\right)
    =\frac{1}{(2m+2)(2m+3)}\;,
\end{equation*}
which in terms of the Riemann zeta function and after reindexing the sum rewrites as
\begin{equation*}
    \sum_{n=1}^{m+1}\frac{(-1)^{n+1}(2m)!}{(2(m-n)+3)!2^{2n-2}}
    \frac{\zeta(2n)}{\pi^{2n}}
    =\frac{1}{(2m+2)(2m+3)}\;.
\end{equation*}
Multiplying through by $(2m+1)(2m+2)(2m+3)$ shows that, for all $m\in\N_0$,
\begin{equation*}
  \sum_{n=1}^{m+1}\binom{2m+3}{2n}
  \left(\frac{(-1)^{n+1}2(2n)!}{\left(2\pi\right)^{2n}}\zeta(2n)\right)
  =\frac{2m+1}{2}\;.
\end{equation*}
Comparing the last expression with the characterisation~(\ref{eq:char_bernoulli_2n}) of the Bernoulli numbers $B_{2n}$ indexed by even positive integers implies that
\begin{equation*}
    B_{2n}=\frac{(-1)^{n+1}2(2n)!}{\left(2\pi\right)^{2n}}\zeta(2n)\;,
\end{equation*}
that is, we have established that, for all $n\in\N$,
\begin{equation*}
  \zeta(2n)=(-1)^{n+1}\frac{\left(2\pi\right)^{2n}B_{2n}}{2(2n)!}\;.
\end{equation*}

\section{Fluctuations for the trigonometric expansions of the Brownian bridge}
\label{sect:fluct}
We first prove Theorem~\ref{thm:sinflucutation} and Corollary~\ref{cor1} which we use to determine
the pointwise limits for the covariance functions of the fluctuation processes for the Karhunen--Lo{\`e}ve expansion and of the fluctuation processes for the Fourier series expansion, and then we deduce Theorem~\ref{thm:fluctuations}.

\subsection{Fluctuations for the Karhunen--Lo{\`e}ve expansion}
\label{sect:fluct1}
For the moment analysis initiated in the previous section to allow us to identify the limit of $NC_1^N$ as $N\to\infty$ on the diagonal away from its endpoints, we apply the Arzel{\`a}--Ascoli theorem to guarantee continuity of the limit away from the endpoints. To this end, we first need to establish the uniform boundedness of two families of functions. Recall that the functions $C_1^N\colon[0,1]\times[0,1]\to\R$ are defined in Lemma~\ref{lem:cov1}.
\begin{lemma}\label{lem:uniform_bound1}
  The family $\{NC_1^N(t,t)\colon N\in\N\text{ and }t\in[0,1]\}$
  is uniformly bounded.
\end{lemma}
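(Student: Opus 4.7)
My plan is to recognise $C_1^N(t,t)$ as the tail of a non-negative convergent series via Mercer's theorem, so that the uniform boundedness of $NC_1^N(t,t)$ reduces to a straightforward tail estimate of $\sum k^{-2}$.

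First, setting $s=t$ in the Mercer representation~(\ref{eq:mercer4bridge}) gives
\begin{equation*}
    t-t^2=\sum_{k=1}^\infty\frac{2(\sin(k\pi t))^2}{k^2\pi^2}\;,
\end{equation*}
so that, using the expression for $C_1^N$ from Lemma~\ref{lem:cov1},
\begin{equation*}
    C_1^N(t,t)=\sum_{k=N+1}^\infty\frac{2(\sin(k\pi t))^2}{k^2\pi^2}\geq 0
\end{equation*}
for every $N\in\N$ and $t\in[0,1]$, with equality at $t=0$ and $t=1$.

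Next, I would apply the pointwise bound $(\sin(k\pi t))^2\leq 1$ term by term and use the standard integral comparison $\sum_{k=N+1}^\infty k^{-2}\leq \int_N^\infty u^{-2}\dd u=\frac{1}{N}$. This yields
\begin{equation*}
    0\leq NC_1^N(t,t)\leq \frac{2N}{\pi^2}\sum_{k=N+1}^\infty \frac{1}{k^2}\leq \frac{2}{\pi^2}\;,
\end{equation*}
uniformly in $N\in\N$ and $t\in[0,1]$, which settles the claim. No real obstacle arises here; the crux is the identification of $C_1^N(t,t)$ as a non-negative tail, after which the estimate is elementary. This uniform bound is the first ingredient needed for the Arzel{\`a}--Ascoli argument outlined in the introduction to this section, which will subsequently combine with equicontinuity on compact subsets of $(0,1)$ and the moment computations of Section~\ref{sect:zeta} to identify the on-diagonal limit of $NC_1^N$.
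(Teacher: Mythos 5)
Your proof is correct and follows essentially the same route as the paper: identify $C_1^N(t,t)$ via Mercer's theorem as the non-negative tail $\sum_{k=N+1}^\infty \frac{2(\sin(k\pi t))^2}{k^2\pi^2}$, bound $(\sin(k\pi t))^2\leq 1$, and estimate the tail of $\sum k^{-2}$ by $1/N$. The only cosmetic difference is that you use an integral comparison for that last step where the paper uses the telescoping bound $\frac{1}{k^2}\leq\frac{1}{k-1}-\frac{1}{k}$; both yield the same constant $\frac{2}{\pi^2}$.
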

\begin{proof}
    Combining the expression for $C_1^N(t,t)$ from Lemma~\ref{lem:cov1} and the representation~(\ref{eq:mercer4bridge}) for $K_B$ arising from Mercer's theorem, we see that
    \begin{equation*}
        NC_1^N(t,t)=N\sum_{k=N+1}^\infty\frac{2\left(\sin(k\pi t)\right)^2}{k^2\pi^2}\;.
    \end{equation*}
    In particular, for all $N\in\N$ and all $t\in[0,1]$, we have
    \begin{equation*}
        \left|NC_1^N(t,t)\right|\leq N\sum_{k=N+1}^\infty\frac{2}{k^2\pi^2}\;.
    \end{equation*}
    We further observe that
    \begin{equation}\label{eq:idseries_bound}
        \lim_{M\to\infty}N\sum_{k=N+1}^M\frac{1}{k^2}\leq \lim_{M\to\infty} N\sum_{k=N+1}^M\left(\frac{1}{k-1}-\frac{1}{k}\right)
        =\lim_{M\to\infty}\left(1-\frac{N}{M}\right)=1\;.
    \end{equation}
    It follows that, for all $N\in\N$ and all $t\in[0,1]$,
    \begin{equation*}
        \left|NC_1^N(t,t)\right|\leq\frac{2}{\pi^2}\;,
    \end{equation*}
    which is illustrated in Figure~\ref{diag:boundedness1} and which establishes the claimed uniform boundedness.
\end{proof}
\begin{figure}[h]
    \centering
    \includegraphics[width=\textwidth]{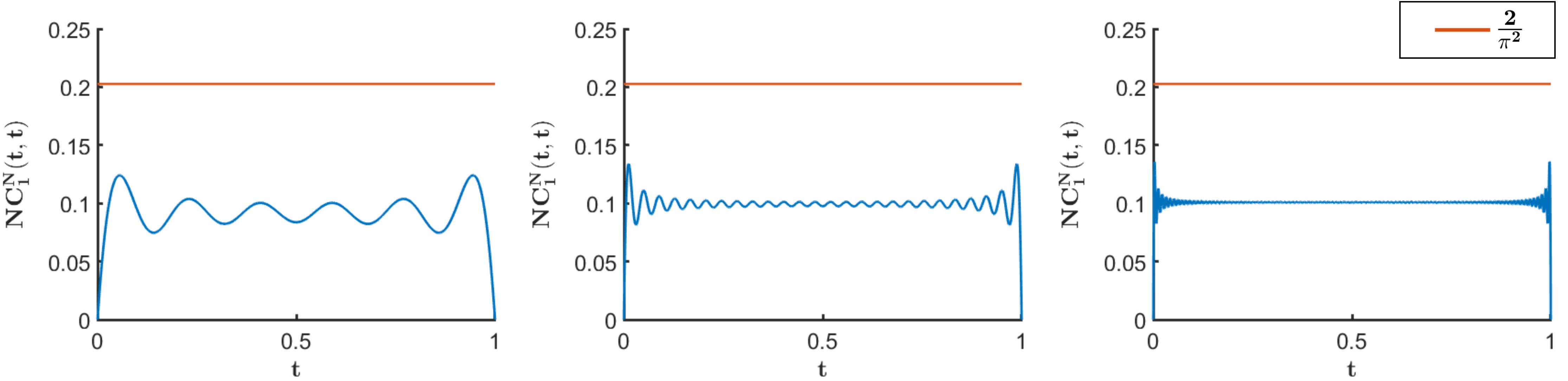}
    \caption{Profiles of $t\mapsto NC_1^N(t,t)$ plotted for $N\in\{5, 25, 100\}$
             along with $t\mapsto \frac{2}{\pi^2}$\;.}
    \label{diag:boundedness1}
\end{figure}

\begin{lemma}\label{lem:uniform_bound2}
  Fix $\eps>0$. The family
  \begin{equation*}
    \left\{N\frac{\db}{\db t}C_1^N(t,t)\colon
      N\in\N\text{ and }t\in[\eps,1-\eps]\right\}
  \end{equation*}
  is uniformly bounded.
\end{lemma}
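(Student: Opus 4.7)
The plan is to differentiate the tail-sum representation of $NC_1^N(t,t)$ derived in the proof of Lemma~\ref{lem:uniform_bound1} and then apply summation by parts to the resulting sine series, using that away from the endpoints the Dirichlet kernel partial sums are uniformly bounded.

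First, I would use $\frac{d}{dt}(\sin(k\pi t))^2 = k\pi\sin(2k\pi t)$ applied to the finite-sum form
\[
C_1^N(t,t) = t - t^2 - \sum_{k=1}^N \frac{2(\sin(k\pi t))^2}{k^2\pi^2}
\]
from Lemma~\ref{lem:cov1} to obtain
\[
\frac{\db}{\db t}C_1^N(t,t) = (1-2t) - \sum_{k=1}^N \frac{2\sin(2k\pi t)}{k\pi}.
\]
Combined with the classical Fourier identity $1-2t = \sum_{k=1}^\infty \frac{2\sin(2k\pi t)}{k\pi}$, valid pointwise for $t\in(0,1)$, this rewrites as the tail sum
\[
N\frac{\db}{\db t}C_1^N(t,t) = N\sum_{k=N+1}^\infty \frac{2\sin(2k\pi t)}{k\pi},
\]
mirroring the way the corresponding expression for $NC_1^N(t,t)$ was handled in Lemma~\ref{lem:uniform_bound1}.

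Next, I would apply Abel summation to control this tail. Setting $T_k(t) := \sum_{j=N+1}^k \sin(2j\pi t)$, the standard Dirichlet kernel estimate $\bigl|\sum_{j=1}^k \sin(2j\pi t)\bigr| \leq \frac{1}{|\sin(\pi t)|}$, together with the lower bound $|\sin(\pi t)| \geq \sin(\pi\eps)$ for $t\in[\eps,1-\eps]$, yields $|T_k(t)| \leq \frac{2}{\sin(\pi\eps)}$ uniformly in $k$. Summation by parts then gives
\[
\sum_{k=N+1}^M \frac{\sin(2k\pi t)}{k} = \frac{T_M(t)}{M} + \sum_{k=N+1}^{M-1} T_k(t)\left(\frac{1}{k} - \frac{1}{k+1}\right),
\]
and telescoping the second term while letting $M\to\infty$ produces the bound
\[
\left|\sum_{k=N+1}^\infty \frac{\sin(2k\pi t)}{k}\right| \leq \frac{2}{(N+1)\sin(\pi\eps)}.
\]
Multiplying by $\frac{2N}{\pi}$ yields the uniform estimate $\bigl|N\frac{\db}{\db t}C_1^N(t,t)\bigr| \leq \frac{4}{\pi\sin(\pi\eps)}$ on $[\eps,1-\eps]$, establishing the claim.

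The main (and really the only non-routine) obstacle is the summation-by-parts step: it depends critically on the Dirichlet partial sums being bounded by a quantity finite on $[\eps,1-\eps]$, which is precisely why the lemma cannot be extended to the full interval $[0,1]$ and why $\eps$ must be fixed before the bound is stated. Everything else, including the rearrangement into a tail sum and the telescoping, is mechanical.
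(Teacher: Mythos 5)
Your proof is correct, and it takes a genuinely different route from the paper's. The paper never invokes the full Fourier series of the sawtooth: it derives the exact partial-sum identity $\frac{\pi-t}{2}-\sum_{k=1}^N\frac{\sin(kt)}{k}=-\frac{1}{2}\int_\pi^t\frac{\sin((N+\frac12)s)}{\sin(s/2)}\dd s$ by integrating the finite Dirichlet kernel, and then extracts the $O(1/N)$ decay by integrating by parts and applying the first mean value theorem for integrals. You instead convert the quantity into the tail $\sum_{k=N+1}^\infty\frac{2\sin(2k\pi t)}{k\pi}$ of a convergent series and control that tail by Abel summation against the uniformly bounded partial sums $\bigl|\sum_{j=1}^k\sin(2j\pi t)\bigr|\le 1/|\sin(\pi t)|$. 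The two arguments are dual (integration by parts on the integrated kernel versus summation by parts on the series), and both ultimately rest on the same Dirichlet-kernel-type bound and yield a uniform bound of order $1/\sin(\pi\eps)$. The one extra input your route needs is the classical pointwise identity $1-2t=\sum_{k=1}^\infty\frac{2\sin(2k\pi t)}{k\pi}$ on $(0,1)$; this is a standard fact (and the paper itself alludes to it in the remark following the lemma, citing Iwaniec), and it does not interfere with the paper's stand-alone derivation of $\zeta(2n)$, but it is worth noting that the paper's version of the argument is self-contained on this point, since it only ever manipulates the finite partial sum. In exchange, your argument is arguably more elementary to write down, avoids the mean value theorem step, and gives the slightly cleaner explicit constant $\frac{4}{\pi\sin(\pi\eps)}$.
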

\begin{proof}
    According to Lemma~\ref{lem:cov1}, we have, for all $t\in[0,1]$,
    \begin{equation*}
        C_1^N(t,t)=t-t^2-\sum_{k=1}^N \frac{2\left(\sin(k\pi t)\right)^2}{k^2\pi^2}\;,
    \end{equation*}
    which implies that
    \begin{equation*}
        N\frac{\db}{\db t}C_1^N(t,t)=N\left(1-2t-\sum_{k=1}^N\frac{2\sin(2k\pi t)}{k\pi}\right)\;.
    \end{equation*}
    The desired result then follows by showing that, for $\eps>0$ fixed, the family
    \begin{equation*}
        \left\{N\left(\frac{\pi-t}{2}-\sum_{k=1}^N\frac{\sin(kt)}{k}\right)\colon
        N\in\N\text{ and }t\in[\eps,2\pi-\eps]\right\}
    \end{equation*}
    is uniformly bounded, as illustrated in Figure~\ref{diag:boundedness2}.
    Employing a usual approach, we use the Dirichlet kernel, for $N\in\N$,
    \begin{equation*}
        \sum_{k=-N}^N\e^{\im kt}=1+\sum_{k=1}^N2\cos(kt)=    \frac{\sin\left(\left(N+\frac{1}{2}\right)t\right)}{\sin\left(\frac{t}{2}\right)}
    \end{equation*}
    to write, for $t\in(0,2\pi)$,
    \begin{equation*}
        \frac{\pi-t}{2}-\sum_{k=1}^N\frac{\sin(kt)}{k}    =-\frac{1}{2}\int_\pi^t\left(1+\sum_{k=1}^N2\cos(ks)\right)\db s
        =-\frac{1}{2}\int_\pi^t\frac{\sin\left(\left(N+\frac{1}{2}\right)s\right)}
        {\sin\left(\frac{s}{2}\right)}\dd s\;.
    \end{equation*}
    Integration by parts yields
    \begin{equation*}
        -\frac{1}{2}\int_\pi^t\frac{\sin\left(\left(N+\frac{1}{2}\right)s\right)}    {\sin\left(\frac{s}{2}\right)}\dd s
        =\frac{\cos\left(\left(N+\frac{1}{2}\right)t\right)}{(2N+1)\sin\left(\frac{t}{2}\right)}
         -\frac{1}{2N+1}\int_\pi^t\cos\left(\left(N+\frac{1}{2}\right)s\right)\frac{\db}{\db s}
         \left(\frac{1}{\sin\left(\frac{s}{2}\right)}\right)\dd s\;.
    \end{equation*}
    By the first mean value theorem for definite integrals, it follows that for $t\in(0,\pi]$ fixed, there exists $\xi\in[t,\pi]$, whereas for $t\in[\pi,2\pi)$ fixed, there exists $\xi\in[\pi,t]$, such that
    \begin{equation*}
        -\frac{1}{2}\int_\pi^t\frac{\sin\left(\left(N+\frac{1}{2}\right)s\right)}    {\sin\left(\frac{s}{2}\right)}\dd s
        =\frac{\cos\left(\left(N+\frac{1}{2}\right)t\right)}{(2N+1)\sin\left(\frac{t}{2}\right)}
         -\frac{\cos\left(\left(N+\frac{1}{2}\right)\xi\right)}{2N+1}
          \left(\frac{1}{\sin\left(\frac{t}{2}\right)}-1\right)\;.
    \end{equation*}
    Since $\left|\cos\left(\left(N+\frac{1}{2}\right)\xi\right)\right|$ is bounded above by one independently of $\xi$ and as $\frac{t}{2}\in(0,\pi)$ for $t\in(0,2\pi)$ implies that $0<\sin\left(\frac{t}{2}\right)\leq 1$, we conclude that, for all $N\in\N$ and for all $t\in(0,2\pi)$,
    \begin{equation*}
        N\left|\frac{\pi-t}{2}-\sum_{k=1}^N\frac{\sin(kt)}{k}\right|
        \leq \frac{2N}{(2N+1)\sin\left(\frac{t}{2}\right)}\;,
    \end{equation*}
    which, for $t\in[\eps,2\pi-\eps]$, is uniformly bounded by $1/\sin\left(\frac{\eps}{2}\right)$\;.
\end{proof}
\begin{figure}[h]
    \centering
    \includegraphics[width=0.8\textwidth]{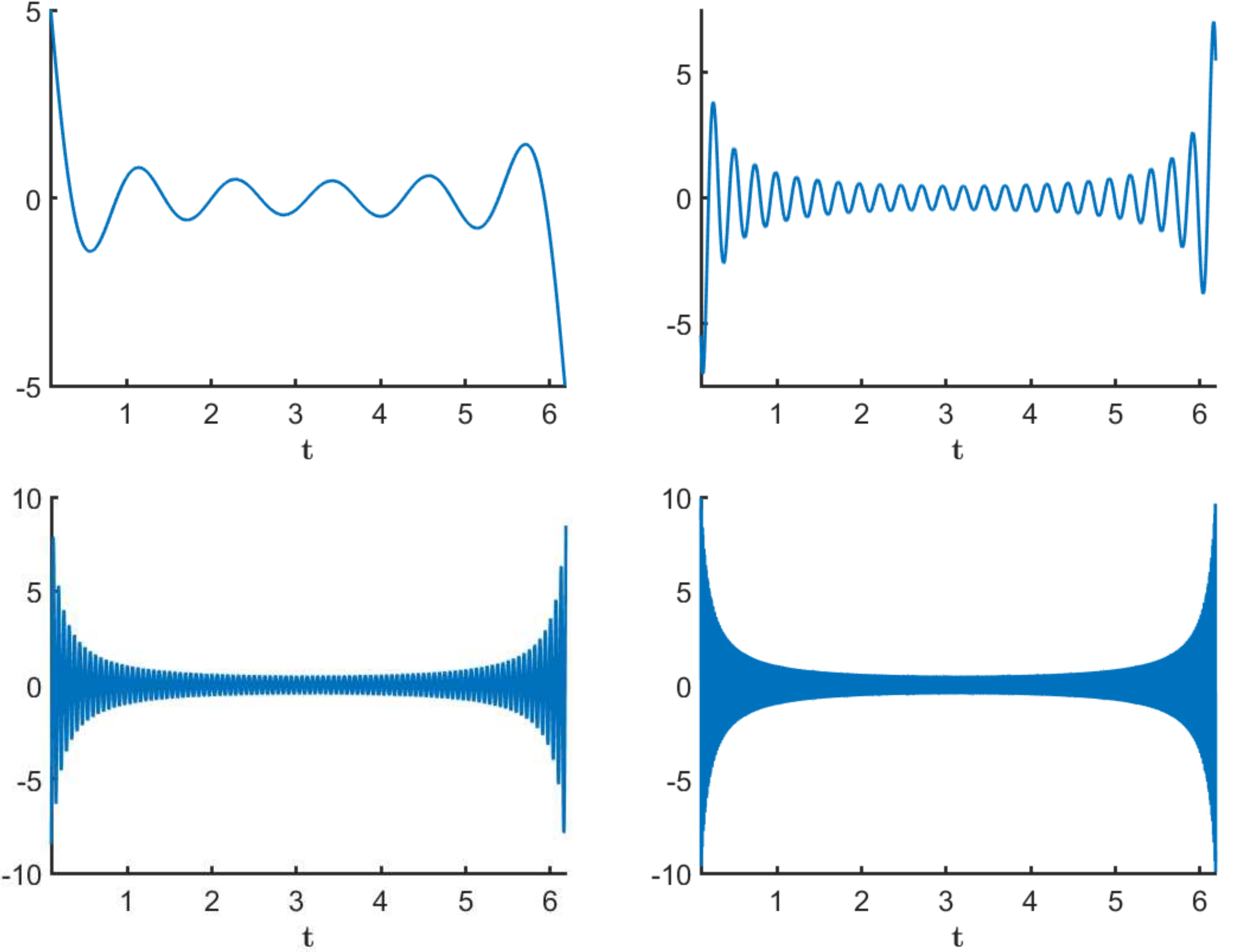}
    \caption{Profiles of $t\mapsto N\bigg(\frac{\pi-t}{2}-\sum\limits_{k=1}^N
    \frac{\sin(kt)}{k}\bigg)$ plotted for $N\in\{5, 25, 100, 1000\}$ on $[\eps, 2\pi - \eps]$ with $\eps = 0.1$\,.}
    \label{diag:boundedness2}
\end{figure} 
\begin{remark0}\rm
    In the proof of the previous lemma, we have essentially controlled the error in the Fourier series expansion for the fractional part of $t$ which is given by
    \begin{equation*}
        \frac{1}{2}-\sum_{k=1}^\infty\frac{\sin(2k\pi t)}{k\pi}\;,
    \end{equation*}
    see~\cite[Exercise on p.~4]{iwaniec}.
\end{remark0}
We can now prove the convergence in Theorem~\ref{thm:sinflucutation} on the diagonal away from the endpoints, which consists of a moment analysis to identify the moments of the limit function as well as an application of the Arzel{\`a}--Ascoli theorem to show that the limit function is continuous away from the endpoints. Alternatively, one could prove Corollary~\ref{cor1} directly with a similar approach as in the proof of Lemma~\ref{lem:uniform_bound2}, but integrating the Dirichlet kernel twice, and then deduce Theorem~\ref{thm:sinflucutation}. However, as the moment analysis was already set up in Section~\ref{sect:zeta} to determine the values of the Riemann zeta function at even positive integers, we demonstrate how to proceed with this approach.
\begin{propn}\label{propn:ondiagonal}
    For all $t\in(0,1)$, we have
    \begin{equation*}
        \lim_{N\to\infty}
        N\left(t-t^2-\sum_{k=1}^N \frac{2\left(\sin(k\pi t)\right)^2}{k^2\pi^2}\right)
        =\frac{1}{\pi^2}\;.
    \end{equation*}
\end{propn}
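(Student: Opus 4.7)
The plan is to combine the equicontinuity estimates from Lemmas~\ref{lem:uniform_bound1} and~\ref{lem:uniform_bound2} with the moment analysis initiated in Section~\ref{sect:zeta} to pin down the pointwise limit.

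First, since Mercer's theorem yields $t-t^2=\sum_{k=1}^\infty\frac{2(\sin(k\pi t))^2}{k^2\pi^2}$, we may rewrite
\[
NC_1^N(t,t)=N\sum_{k=N+1}^\infty\frac{2(\sin(k\pi t))^2}{k^2\pi^2}\;.
\]
On any compact subinterval $[\eps,1-\eps]\subset(0,1)$, the family $\{t\mapsto NC_1^N(t,t)\}_{N\in\N}$ is uniformly bounded by Lemma~\ref{lem:uniform_bound1} and has uniformly bounded derivative by Lemma~\ref{lem:uniform_bound2}, hence is equicontinuous. The Arzel{\`a}--Ascoli theorem together with a diagonal argument over shrinking $\eps$ produces a subsequence $(N_j)_{j\in\N}$ and a continuous function $f\colon(0,1)\to\R$ so that $N_j C_1^{N_j}(t,t)\to f(t)$ uniformly on compact subsets of $(0,1)$.

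Next, I would identify $f$ through its moments. By Tonelli's theorem, using the non-negativity of $C_1^N(t,t)$ and the notation $e_{k,n}$ from Section~\ref{sect:zeta}, we get
\[
N\int_0^1 C_1^N(t,t)\m t^n\dd t = N\sum_{k=N+1}^\infty\frac{e_{k,n}}{k^2\pi^2}\;.
\]
Substituting the explicit expression from Lemma~\ref{lem:moment_expression}, the leading contribution is $\frac{1}{(n+1)\pi^2}\cdot N\sum_{k=N+1}^\infty k^{-2}$, which tends to $\frac{1}{(n+1)\pi^2}$ by the telescoping bound~(\ref{eq:idseries_bound}); all remaining summands involve $N\sum_{k=N+1}^\infty k^{-2m-2}$ for some $m\geq 1$ and are therefore $O(N^{-2m})$. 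Since $|NC_1^N(t,t)|\leq 2/\pi^2$ uniformly on $[0,1]$ by Lemma~\ref{lem:uniform_bound1}, dominated convergence along $(N_j)$ yields
\[
\int_0^1 f(t)\m t^n\dd t=\frac{1}{(n+1)\pi^2}=\int_0^1\frac{1}{\pi^2}\m t^n\dd t\quad\text{for every }n\in\N_0\;.
\]
By the Weierstrass approximation theorem, the moments determine a bounded measurable function on $[0,1]$ up to a null set, so $f=\frac{1}{\pi^2}$ almost everywhere on $(0,1)$, and continuity forces $f(t)=\frac{1}{\pi^2}$ for every $t\in(0,1)$.

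Finally, since every subsequence of $\{NC_1^N(\cdot,\cdot)\}_{N\in\N}$ admits a further subsequence converging to the constant $\frac{1}{\pi^2}$ on compact subsets of $(0,1)$, the full sequence converges pointwise to $\frac{1}{\pi^2}$ on $(0,1)$, which is exactly the proposition. The most delicate step is the passage from moment convergence of $NC_1^N$ to moment convergence of $f$: the uniform bound $|NC_1^N(t,t)|\leq 2/\pi^2$ valid on the entire interval $[0,1]$ is what allows dominated convergence to be applied despite $f$ being controlled only on compact subsets of $(0,1)$, so the endpoint behaviour does not spoil the moment identification.
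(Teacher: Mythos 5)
Your proposal is correct and follows essentially the same route as the paper: rewrite $NC_1^N(t,t)$ as a tail sum via Mercer's theorem, use Lemmas~\ref{lem:uniform_bound1} and~\ref{lem:uniform_bound2} with Arzel\`a--Ascoli and a diagonal argument to extract locally uniformly convergent subsequences with continuous limits, identify the limit through the moment computation of Lemma~\ref{lem:moment_expression} together with the limit~(\ref{eq:Nk21}), and conclude by a subsequence argument. Your write-up is in fact slightly more explicit than the paper's at two points the paper leaves implicit, namely the dominated-convergence step transferring the moment limits to the limit function $f$ and the Weierstrass argument showing that the moments determine $f$.
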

\begin{proof}
    Recall that, due Lemma~\ref{lem:cov1} and the representation~(\ref{eq:mercer4bridge}), we have, for $t\in[0,1]$,
    \begin{equation}\label{eq:seriesrepforC1N}
        C_1^N(t,t)=t-t^2-\sum_{k=1}^N \frac{2\left(\sin(k\pi t)\right)^2}{k^2\pi^2}
        =\sum_{k=N+1}^\infty \frac{2\left(\sin(k\pi t)\right)^2}{k^2\pi^2}\;.
    \end{equation}
    By Lemma~\ref{lem:uniform_bound1} and Lemma~\ref{lem:uniform_bound2},
    the Arzel{\`a}--Ascoli theorem can be applied locally to any subsequence of $\{N C_1^N\}_{N\in\N}$. Repeatedly using the Arzel\`{a}--Ascoli theorem and a diagonal argument, we deduce that there exists a subsequence of $\{N C_1^N\}_{N\in\N}$ which converges pointwise to a continuous limit function on the interval $(0,1)$. To prove that the full sequence converges pointwise and to identify the limit function, we proceed with the moment analysis initiated in Section~\ref{sect:zeta}. Applying Lemma~\ref{lem:moment_expression}, we see that, for $m\in\N_0$,
    \begin{align}
        N\sum_{k=N+1}^\infty\frac{e_{k,2m}}{k^2\pi^2}\label{eq:evenmoments}
        &=N\sum_{k=N+1}^\infty\frac{1}{k^2\pi^2}
          \left(\frac{1}{2m+1}+
          \sum_{n=1}^m\frac{(-1)^n(2m)!}{(2(m-n)+1)!2^{2n}}\frac{1}{k^{2n}\pi^{2n}}\right)\;,\\
        N\sum_{k=N+1}^\infty\frac{e_{k,2m+1}}{k^2\pi^2}\label{eq:oddmoments}
        &=N\sum_{k=N+1}^\infty\frac{1}{k^2\pi^2}
          \left(\frac{1}{2m+2}+    \sum_{n=1}^m\frac{(-1)^n(2m+1)!}{(2(m-n)+2)!2^{2n}}\frac{1}{k^{2n}\pi^{2n}}\right)\;.
    \end{align}
    The bound~(\ref{eq:idseries_bound}) together with
    \begin{equation*}
        \lim_{M\to\infty}N\sum_{k=N+1}^M\frac{1}{k^2}
        \geq \lim_{M\to\infty}N\sum_{k=N+1}^M\left(\frac{1}{k}-\frac{1}{k+1}\right)  =\lim_{M\to\infty}\left(\frac{N}{N+1}-\frac{N}{M+1}\right)
        =\frac{N}{N+1}
    \end{equation*}
    implies that
    \begin{equation}\label{eq:Nk21}
        \lim_{N\to\infty}N\sum_{k=N+1}^\infty\frac{1}{k^2}=1\;.
    \end{equation}
    For $n\in\N$, we further have
    \begin{equation*}
        0\leq N\sum_{k=N+1}^\infty\frac{1}{k^{2n+2}}
        \leq \frac{N}{(N+1)^2}\sum_{k=N+1}^\infty\frac{1}{k^{2n}}
        \leq \frac{1}{N}\sum_{k=1}^\infty\frac{1}{k^{2n}}\;,
    \end{equation*}
    and since $\sum_{k=1}^\infty k^{-2n}$ converges, this yields
    \begin{equation*}
        \lim_{N\to\infty}N\sum_{k=N+1}^\infty\frac{1}{k^{2n+2}}=0
        \quad\text{for }n\in\N\;.
    \end{equation*}
    From~(\ref{eq:seriesrepforC1N}) as well as (\ref{eq:evenmoments}) and (\ref{eq:oddmoments}), it follows that, for all $n\in\N_0$,
    \begin{equation*}
        \lim_{N\to\infty}\int_0^1 N C_1^N(t,t) t^n\dd t=
        \lim_{N\to\infty}N\sum_{k=N+1}^\infty\frac{e_{k,n}}{k^2\pi^2}=\frac{1}{(n+1)\pi^2}\;.
    \end{equation*}
    This shows that, for all $n\in\N_0$,
    \begin{equation*}
        \lim_{N\to\infty}\int_0^1 N C_1^N(t,t) t^n\dd t=
        \int_0^1\frac{1}{\pi^2}t^n\dd t\;.
    \end{equation*}
    If the sequence $\{N C_1^N\}_{N\in\N}$ failed to converge pointwise, we could use the Arzel\`{a}--Ascoli theorem and a diagonal argument to construct a second subsequence of $\{N C_1^N\}_{N\in\N}$ converging pointwise but to a different continuous limit function on $(0,1)$ compared to the first subsequence. Since this contradicts the convergence of moments, the claimed result follows.
\end{proof}
We included the on-diagonal convergence in Theorem~\ref{thm:sinflucutation} as a separate statement to demonstrate that Corollary~\ref{cor1} is a consequence of Proposition~\ref{propn:ondiagonal}, which is then used to prove the off-diagonal convergence in Theorem~\ref{thm:sinflucutation}.
\begin{proof}[Proof of Corollary~\ref{cor1}]
    Using the identity that, for $k\in\N$,
    \begin{equation}\label{eq:another_trig_id}
        \cos(2k\pi t)=1-2\left(\sin(k\pi t)\right)^2\;,
    \end{equation}
    we obtain
    \begin{equation*}
        \sum_{k=N+1}^\infty\frac{\cos(2k\pi t)}{k^2\pi^2}
        =\sum_{k=N+1}^\infty\frac{1}{k^2\pi^2}-
         \sum_{k=N+1}^\infty\frac{2\left(\sin(k\pi t)\right)^2}{k^2\pi^2}\;.
    \end{equation*}
    From~(\ref{eq:Nk21}) and Proposition~\ref{propn:ondiagonal}, it follows that, for all $t\in(0,1)$,
    \begin{equation*}
        \lim_{N\to\infty} N\sum_{k=N+1}^\infty\frac{\cos(2k\pi t)}{k^2\pi^2}
        =\frac{1}{\pi^2}-\frac{1}{\pi^2}=0\;,
    \end{equation*}
    as claimed.
\end{proof}
\begin{proof}[Proof of Theorem~\ref{thm:sinflucutation}]
    If $s\in\{0,1\}$ or $t\in\{0,1\}$, the result follows immediately from $\sin(k\pi)=0$ for all $k\in\N_0$, and if $s=t$ for $t\in(0,1)$, the claimed convergence is given by Proposition~\ref{propn:ondiagonal}. Therefore, it remains to consider the off-diagonal case, and we may assume that $s,t\in(0,1)$ are such that $s<t$. Due to the representation~(\ref{eq:mercer4bridge}) and the identity
    \begin{equation*}
        2\sin(k\pi s)\sin(k\pi t)=\cos(k\pi(t-s))-\cos(k\pi(t+s))\;,
    \end{equation*}
    we have
    \begin{align*}
        \min(s,t)-st-\sum_{k=1}^N\frac{2\sin(k\pi s)\sin(k\pi t)}{k^2\pi^2}
        &=\sum_{k=N+1}^\infty \frac{2\sin(k\pi s)\sin(k\pi t)}{k^2\pi^2}\\
        &=\sum_{k=N+1}^\infty \frac{\cos(k\pi(t-s))-\cos(k\pi(t+s))}{k^2\pi^2}\;.
    \end{align*}
    Since $0<t-s<t+s<2$ for $s,t\in(0,1)$ with $s<t$, the convergence away from the diagonal is a consequence of Corollary~\ref{cor1}.
\end{proof}
Note that Theorem~\ref{thm:sinflucutation} states, for $s,t\in[0,1]$,
\begin{equation}\label{eq:ptwsC1}
    \lim_{N\to\infty} N C_1^N(s,t)=
    \begin{cases}
        \frac{1}{\pi^2} & \text{if } s=t\text{ and } t\in(0,1)\\
        0 & \text{otherwise}
    \end{cases}\;,
\end{equation}
which is the key ingredient for obtaining the characterisation of the limit fluctuations for the Karhunen--Lo{\`e}ve expansion given in Theorem~\ref{thm:fluctuations}. We provide the full proof of Theorem~\ref{thm:fluctuations} below after having determined the limit of $2N C_2^N$ as $N\to\infty$.

\subsection{Fluctuations for the Fourier series expansion}
\label{sect:fluct2}
Instead of setting up another moment analysis to study the pointwise limit of $2N C_2^N$ as $N\to\infty$, we simplify the expression for $C_2^N$ from Lemma~\ref{lem:cov2} and deduce the desired pointwise limit from Corollary~\ref{cor1}.

\medskip

Using the standard Fourier basis for $L^2([0,1])$, the polarised Parseval identity and the trigonometric identity~(\ref{eq:trig_identity}), we can write, for $s,t\in[0,1]$,
\begin{align*}
    \min(s,t)&=\int_0^1\ind_{[0,s]}(r)\ind_{[0,t]}(r)\dd r\\
    &=st+\sum_{k=1}^\infty 2\int_0^s \cos(2k\pi r)\dd r\int_0^t \cos(2k\pi r)\dd r
        +\sum_{k=1}^\infty 2\int_0^s \sin(2k\pi r)\dd r\int_0^t \sin(2k\pi r)\dd r\\
    &=st-\sum_{k=1}^\infty\frac{\cos(2k\pi s)}{2k^2\pi^2}
        -\sum_{k=1}^\infty\frac{\cos(2k\pi t)}{2k^2\pi^2}
        +\sum_{k=1}^\infty\frac{\cos(2k\pi (t-s))}{2k^2\pi^2}+\sum_{k=1}^\infty\frac{1}{2k^2\pi^2}\;.
\end{align*}
Applying the identity~(\ref{eq:another_trig_id}) as well as the representation~(\ref{eq:mercer4bridge}) and using the value for $\zeta(2)$ derived in Section~\ref{sect:zeta}, we have
\begin{equation*}
    \sum_{k=1}^\infty\frac{\cos(2k\pi t)}{2k^2\pi^2}=
    \sum_{k=1}^\infty\frac{1}{2k^2\pi^2}-
    \sum_{k=1}^\infty\frac{\left(\sin(k\pi t)\right)^2}{k^2\pi^2}
    =\frac{1}{12}+\frac{t^2-t}{2}\;.
\end{equation*}
Once again exploiting the value for $\zeta(2)$, we obtain
\begin{equation*}
    \min(s,t)-st+\frac{s^2-s}{2}+\frac{t^2-t}{2}+\frac{1}{12}
    =\sum_{k=1}^\infty\frac{\cos(2k\pi (t-s))}{2k^2\pi^2}\;.
\end{equation*}
Using the expression for $C_2^N$ from Lemma~\ref{lem:cov2}, it follows that, for $s,t\in[0,1]$,
\begin{equation*}
    C_2^N(s,t)=\sum_{k=N+1}^\infty\frac{\cos(2k\pi (t-s))}{2k^2\pi^2}\;.
\end{equation*}
This implies that if $t-s$ is an integer then, as a result of the limit~(\ref{eq:Nk21}),
\begin{equation*}
    \lim_{N\to\infty} 2NC_2^N(s,t)=\frac{1}{\pi^2}\;,
\end{equation*}
whereas if $t-s$ is not an integer then, by Corollary~\ref{cor1},
\begin{equation*}
    \lim_{N\to\infty} 2NC_2^N(s,t)=0\;.
\end{equation*}
This can be summarised as, for $s,t\in[0,1]$,
\begin{equation}\label{eq:ptwsC2}
    \lim_{N\to\infty} 2NC_2^N(s,t)=
        \begin{cases}
            \frac{1}{\pi^2} & \text{if } s=t\text{ or } s,t\in\{0,1\}\\\
            0 & \text{otherwise}
        \end{cases}\;.
\end{equation}
We finally prove Theorem~\ref{thm:fluctuations} by considering characteristic functions.
\begin{proof}[Proof of Theorem~\ref{thm:fluctuations}]
    According to Lemma~\ref{lem:cov1} as well as Lemma~\ref{lem:cov2}, the fluctuation processes $(F_t^{N,1})_{t\in[0,1]}$ and $(F_t^{N,2})_{t\in[0,1]}$ are zero-mean Gaussian processes with covariance functions $N C_1^N$ and $2N C_2^N$, respectively.
    
    \medskip
    
    By the pointwise convergences~(\ref{eq:ptwsC1}) and (\ref{eq:ptwsC2}) of the covariance functions in the limit $N\to\infty$, for any $n\in\N$ and any $t_1,\dots,t_n\in[0,1]$, the characteristic functions of the Gaussian random vectors $(F_{t_1}^{N,i},\dots,F_{t_n}^{N,i})$, for $i\in\{1,2\}$, converge pointwise as $N\to\infty$ to the characteristic function of the Gaussian random vector $(F_{t_1}^{i},\dots,F_{t_n}^{i})$. Therefore, the claimed convergences in finite dimensional distributions are consequences of L{\'e}vy's continuity theorem.
\end{proof}

\section{Approximations of Brownian L\'{e}vy area}\label{sect:levyarea}
In this section, we consider approximations of second iterated integrals of Brownian motion, which is a classical problem in the numerical analysis of stochastic differential equations (SDEs), see \cite{kloedenplaten}. Due to their presence within stochastic Taylor expansions, increments and second iterated integrals of multidimensional Brownian motion are required by high order strong methods for general SDEs, such as stochastic Taylor \cite{kloedenplaten} and Runge--Kutta \cite{StrongSRK} methods. Currently, the only methodology for exactly generating the increment and second iterated integral, or equivalently the L\'{e}vy area, given by Definition \ref{def:levy_area}, of a $d$-dimensional Brownian motion is limited to the case when $d = 2$. This algorithm for the exact generation of Brownian increments and L\'{e}vy area is detailed in \cite{GainesLyonsInt}.
The approach adapts Marsaglia's ``rectangle-wedge-tail'' algorithm to the joint density function of $\big(W_1^{(1)}, W_1^{(2)}, A_{0,1}^{(1,2)}\big)$, which is expressible as an integral, but can only be evaluated numerically. Due to the subtle relationships between different entries in $A_{0,1}$, it has not been extended to $d>2$. \medskip

Obtaining good approximations of Brownian L\'{e}vy area in an $L^{2}(\mathbb{P})$ sense is known to be difficult. For example, it was shown in \cite{Dickinson} that any approximation of L\'{e}vy area which
is measurable with respect to $N$ Gaussian random variables, obtained from linear functionals of the Brownian path, cannot achieve strong convergence faster than $O(N^{-\frac{1}{2}})$. In particular, this result extends the classical theorem of Clark and Cameron \cite{CameronClark} which establishes a best convergence rate of $O(N^{-\frac{1}{2}})$ for approximations of L\'{e}vy area based on only the Brownian increments $\{W_{(n+1)h} - W_{nh}\}_{0\leq n\leq N - 1}$. Therefore, approximations have been developed which fall outside of this paradigm, see \cite{Davie, Fosterthesis, RecentLevyArea, Wiktorsson}. In the analysis of these methodologies, the L\'{e}vy area of Brownian motion and its approximation are probabilistically coupled in such a way that $L^{2}(\mathbb{P})$ convergence rates of $O(N^{-1})$ can be established.\medskip

We are interested in the approximations of Brownian L\'{e}vy area that can be obtained directly from the Fourier series expansion~(\ref{eq:bridge_fourier_expansion}) and the polynomial expansion~(\ref{eq:bridge_poly_expansion}) of the Brownian bridge.
For the remainder of the section, the Brownian motion $(W_t)_{t\in[0,1]}$ is assumed to be $d$-dimensional and $(B_t)_{t\in[0,1]}$ is its associated Brownian bridge.\medskip

We first recall the standard Fourier approach to the strong approximation of Brownian L\'{e}vy area.

\begin{thm}[Approximation of Brownian L\'{e}vy area via Fourier coefficients, see {\cite[p. 205]{kloedenplaten} and \cite[p. 99]{MilsteinBook}}]\label{thm:levy_fourier_approx}
For $n\in\N$, we define a random antisymmetric $d\times d$ matrix $\widehat{A}_{n}$ by, for $i,j\in\{1,\dots,d\}$,
\begin{equation*}
	 \widehat{A}_{n}^{\m(i,j)} := \frac{1}{2}\left(a_0^{(i)}W_1^{(j)} - W_1^{(i)}a_0^{(j)}\right) + \pi\sum_{k=1}^{n-1} k\left(a_{k}^{(i)}b_k^{(j)} - b_k^{(i)}a_{k}^{(j)}\right)\,,
\end{equation*}
where the normal random vectors $\{a_k\}_{k\in\N_0}$ and $\{b_k\}_{k\in\N}$ are the coefficients from the Brownian bridge expansion (\ref{eq:bridge_fourier_expansion}), that is, the coordinates of each random vector are independent and defined according to (\ref{eq:fourier_coefficients}). Then, for $i,j\in\{1,\dots,d\}$ with $i\neq j$, we have
\begin{align*}
\E\bigg[\Big(A_{0,1}^{(i,j)} - \widehat{A}_{n}^{\m(i,j)}\Big)^2\m\bigg] & = \frac{1}{2\pi^2}\sum_{k = n}^{\infty}\frac{1}{k^2}\;.
\end{align*}
\end{thm}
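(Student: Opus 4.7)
The plan is to express $A_{0,1}^{(i,j)}$ as an infinite series whose $n$-th partial sum is precisely $\widehat{A}_n^{(i,j)}$, so that the $L^2$ error reduces to the second moment of an explicit Gaussian tail.

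First I would decompose the L\'{e}vy area using $W_r = B_r + rW_1$ and $\dd W_r = \dd B_r + W_1\dd r$. Substituting into Definition~\ref{def:levy_area} and expanding, the $W_1^{(i)}W_1^{(j)}$ contributions cancel by antisymmetry, and using the bridge identities $\int_0^1 B_r\dd r = \frac{1}{2}a_0$ and $\int_0^1 r\dd B_r = -\frac{1}{2}a_0$ (the latter via integration by parts, exploiting $B_0 = B_1 = 0$), one obtains
\begin{equation*}
A_{0,1}^{(i,j)} = \frac{1}{2}\bigl(a_0^{(i)}W_1^{(j)} - W_1^{(i)}a_0^{(j)}\bigr) + \frac{1}{2}\!\int_0^1\!\bigl(B_r^{(i)}\dd B_r^{(j)} - B_r^{(j)}\dd B_r^{(i)}\bigr).
\end{equation*}
The boundary term already coincides with the first piece of $\widehat{A}_n^{(i,j)}$, so it remains to match the bridge L\'{e}vy area with the sine/cosine series. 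Substituting the Fourier expansion~(\ref{eq:bridge_fourier_expansion}) for $B_r^{(i)}$ into $\int_0^1 B_r^{(i)}\dd B_r^{(j)}$ and swapping the infinite sum with the It\^{o} integral, the $\frac{1}{2}a_0^{(i)}$ term contributes $\frac{1}{2}a_0^{(i)}(B_1^{(j)}-B_0^{(j)}) = 0$, while the identities~(\ref{eq:akbk}) give $\int_0^1\cos(2k\pi r)\dd B_r^{(j)} = k\pi b_k^{(j)}$ and $\int_0^1\sin(2k\pi r)\dd B_r^{(j)} = -k\pi a_k^{(j)}$. Antisymmetrising in $(i,j)$ then yields
\begin{equation*}
\frac{1}{2}\!\int_0^1\!\bigl(B_r^{(i)}\dd B_r^{(j)} - B_r^{(j)}\dd B_r^{(i)}\bigr) = \pi\sum_{k=1}^\infty k\bigl(a_k^{(i)}b_k^{(j)} - b_k^{(i)}a_k^{(j)}\bigr),
\end{equation*}
and hence $A_{0,1}^{(i,j)} - \widehat{A}_n^{(i,j)} = \pi\sum_{k=n}^\infty k\bigl(a_k^{(i)}b_k^{(j)} - b_k^{(i)}a_k^{(j)}\bigr)$.

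For the variance computation, note that for $i \neq j$ the random vectors $\bigl\{\bigl(a_k^{(i)},b_k^{(i)},a_k^{(j)},b_k^{(j)}\bigr)\bigr\}_{k \in \N}$ are jointly centred Gaussian, with the $(i)$-components independent of the $(j)$-components, with $\{a_k,b_k\}_{k\in\N}$ pairwise uncorrelated by~(\ref{eq:akal}) and~(\ref{eq:remaining_covariances}), and with $\var(a_k) = \var(b_k) = 1/(2k^2\pi^2)$. All cross terms therefore vanish, each summand contributes $\E\bigl[\bigl(a_k^{(i)}b_k^{(j)} - b_k^{(i)}a_k^{(j)}\bigr)^2\bigr] = 2/(2k^2\pi^2)^2 = 1/(2k^4\pi^4)$, and summing yields $\pi^2\sum_{k=n}^\infty k^2/(2k^4\pi^4) = \frac{1}{2\pi^2}\sum_{k=n}^\infty 1/k^2$, as claimed.

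The main obstacle is the justification of the interchange of the infinite sum with the It\^{o} integral in the previous step. I would handle it by observing that the Fourier partial sums of $B^{(i)}$ converge to $B^{(i)}$ in $L^2(\pr\otimes\db r)$ via a direct Parseval computation (the mean-square tail is independent of $r$), while on integrands independent of $B^{(j)}$ the map $f \mapsto \int_0^1 f(r)\dd B_r^{(j)}$ is a bounded linear operator from $L^2(\db r)$ to $L^2(\pr)$ thanks to Lemma~\ref{lem:itoBB} applied conditionally on $B^{(i)}$; composing the two gives the required $L^2(\pr)$-convergence of the truncated series to $\int_0^1 B_r^{(i)}\dd B_r^{(j)}$.
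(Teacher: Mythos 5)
Your proposal is correct and is essentially the argument the paper has in mind: the paper omits an explicit proof of this theorem (citing Kloeden--Platen and Milstein and describing it as a ``direct calculation''), but your decomposition $W_r = B_r + rW_1$, substitution of the Fourier expansion into $\int_0^1 B_r^{(i)}\dd B_r^{(j)}$ via the identities~(\ref{eq:akbk}), and tail-variance computation using the orthogonality from~(\ref{eq:akal}) and~(\ref{eq:remaining_covariances}) is exactly that calculation, and it mirrors step for step the paper's own proof of the polynomial analogue, Theorem~\ref{thm:poly_area_error}. Your extra care over the $L^2(\pr)$ interchange of sum and stochastic integral is a welcome addition that the paper itself glosses over.
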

\begin{remark0}\rm
Using the covariance structure given by (\ref{eq:a02}), (\ref{eq:akal}), (\ref{eq:remaining_covariances}) and the independence of the components of a Brownian bridge, it immediately follows that the coefficients $\{a_k\}_{k\in\N_0}$ and $\{b_k\}_{k\in\N}$ are jointly normal with $a_0\sim \mathcal{N}\big(0,\frac{1}{3}I_d\big)$, $a_k, b_k\sim \mathcal{N}\big(0,\frac{1}{2k^2\pi^2}I_d\big)$, $\cov(a_0, a_k) = -\frac{1}{k^2\pi^2}I_d$ and $\cov(a_l, b_k) = 0$ for $k\in\N$ and $l\in\N_0$.
\end{remark0}
In practice, the above approximation may involve generating the $N$ independent random vectors $\{a_k\}_{1\leq k\leq N}$ followed by the coefficient $a_0$, which will not be independent, but can be expressed as a linear combination of $\{a_k\}_{1\leq k\leq N}$ along with an additional independent normal random vector.
Without this additional normal random vector, we obtain the following discretisation of L\'{e}vy area.
\begin{thm}[Kloeden--Platen--Wright approximation of Brownian L\'{e}vy area, see {\cite{KloedenPlatenWright, MilsteinBook2, Wiktorsson}}]
For $n\in\N$, we define a random antisymmetric $d\times d$ matrix $\widetilde{A}_{n}$ by, for $i,j\in\{1,\dots,d\}$,
\begin{equation*}
	 \widetilde{A}_{n}^{\m(i,j)} := \pi\sum_{k=1}^{n-1} k\left(a_{k}^{(i)}\left(b_k^{(j)} - \frac{1}{k\pi}W_1^{(j)}\right) - \left(b_k^{(i)} - \frac{1}{k\pi}W_1^{(i)}\right)a_{k}^{(j)}\right)\,,
\end{equation*}
where the sequences $\{a_k\}_{k\in\N}$ and $\{b_k\}_{k\in\N}$ of independent normal random vectors are the same as before. Then, for $i,j\in\{1,\dots,d\}$ with $i\neq j$, we have
\begin{align*}
\E\bigg[\Big(A_{0,1}^{(i,j)} - \widetilde{A}_{n}^{\m(i,j)}\Big)^2\m\bigg] & = \frac{3}{2\pi^2}\sum_{k = n}^{\infty}\frac{1}{k^2}\;.
\end{align*}
\end{thm}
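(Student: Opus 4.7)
The plan is to leverage the known error formula for $\widehat{A}_n$ from Theorem~\ref{thm:levy_fourier_approx} by algebraically relating $\widetilde{A}_n$ to $\widehat{A}_n$. The connecting identity is
\begin{equation*}
\tfrac{1}{2}a_0^{(\ell)} = -\sum_{k=1}^\infty a_k^{(\ell)} \quad\text{for each } \ell\in\{1,\dots,d\}\,,
\end{equation*}
which follows from evaluating the Fourier series~(\ref{eq:bridge_fourier_expansion}) at $t=0$ and using $B_0=0$ (the partial sums converge in $L^2([0,1];\R)$, and in fact almost surely uniformly since the bridge is continuous).

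A short rearrangement of the definition of $\widetilde{A}_n$ gives
\begin{equation*}
\widetilde{A}_n^{(i,j)} = \pi\sum_{k=1}^{n-1}k\big(a_k^{(i)}b_k^{(j)} - b_k^{(i)}a_k^{(j)}\big) - \sum_{k=1}^{n-1}\big(a_k^{(i)}W_1^{(j)} - W_1^{(i)}a_k^{(j)}\big)\,,
\end{equation*}
and combining this with the formula for $\widehat{A}_n$ and the identity above yields
\begin{equation*}
\widehat{A}_n^{(i,j)} - \widetilde{A}_n^{(i,j)} = -\sum_{k=n}^\infty \big(a_k^{(i)}W_1^{(j)} - W_1^{(i)}a_k^{(j)}\big)\,.
\end{equation*}
Setting $X = A_{0,1}^{(i,j)} - \widehat{A}_n^{(i,j)} = \pi\sum_{k=n}^\infty k\big(a_k^{(i)}b_k^{(j)} - b_k^{(i)}a_k^{(j)}\big)$ and $Y = \sum_{k=n}^\infty \big(a_k^{(i)}W_1^{(j)} - W_1^{(i)}a_k^{(j)}\big)$, one then has $A_{0,1}^{(i,j)} - \widetilde{A}_n^{(i,j)} = X - Y$, and I would expand $\E[(X-Y)^2] = \E[X^2] - 2\m\E[XY] + \E[Y^2]$.

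For the three second moments, the tools are: (i) the standard fact that the Brownian bridge $(B_t^{(\ell)})$ is independent of $W_1^{(\ell)}$, so every $a_k^{(\ell)}$ is independent of $W_1^{(\ell)}$; (ii) the independence of the $d$ coordinates of $W$, and hence of all bridge coefficients across distinct coordinates; (iii) the covariance relations~(\ref{eq:akal}) and~(\ref{eq:remaining_covariances}). The first term $\E[X^2] = \frac{1}{2\pi^2}\sum_{k=n}^\infty \frac{1}{k^2}$ is precisely the error of $\widehat{A}_n$ furnished by Theorem~\ref{thm:levy_fourier_approx}. The third term is computed directly using $\var(W_1^{(\ell)})=1$ and $\var(a_k^{(\ell)}) = \frac{1}{2k^2\pi^2}$, producing $\E[Y^2] = \frac{1}{\pi^2}\sum_{k=n}^\infty \frac{1}{k^2}$. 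Adding these to the vanishing cross term gives $\frac{3}{2\pi^2}\sum_{k=n}^\infty \frac{1}{k^2}$, as claimed.

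The main obstacle will be the cross-term calculation $\E[XY] = 0$. Every monomial in the expansion of $XY$ contains exactly one factor $W_1^{(\ell)}$ coupled in the same coordinate $\ell$ with a product of bridge coefficients, together with an independent factor in the other coordinate; bridge-endpoint independence (i) then splits off $\E[W_1^{(\ell)}] = 0$ and kills the monomial. This requires careful bookkeeping of which coordinates and Fourier indices are paired, but involves no new ideas beyond~(i)--(iii). The only other subtlety is justifying the identity $\frac{1}{2}a_0 = -\sum_{k\geq 1}a_k$ strongly enough to substitute termwise, which is immediate from $L^2$ convergence of the Fourier partial sums at $t=0$.
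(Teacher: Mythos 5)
Your proposal is correct, but it is organised differently from the paper's proof, which is a one-line direct calculation: the paper takes the exact Kloeden--Platen--Wright series for $A_{0,1}$ (Table~\ref{table:area_expansions}) as given, notes that the error is therefore the tail $\pi\sum_{k=n}^{\infty}k\big(a_k^{(i)}(b_k^{(j)}-\tfrac{1}{k\pi}W_1^{(j)})-(b_k^{(i)}-\tfrac{1}{k\pi}W_1^{(i)})a_k^{(j)}\big)$, and observes that $\E\big[(b_k^{(i)}-\tfrac{1}{k\pi}W_1^{(i)})^2\big]=\tfrac{3}{2k^2\pi^2}=3\,\E\big[(b_k^{(i)})^2\big]$, so each mode contributes three times the variance it does in Theorem~\ref{thm:levy_fourier_approx}. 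You instead write $A_{0,1}^{(i,j)}-\widetilde{A}_n^{\m(i,j)}=X-Y$ with $X$ the error of $\widehat{A}_n$ and $Y=\sum_{k\geq n}\big(a_k^{(i)}W_1^{(j)}-W_1^{(i)}a_k^{(j)}\big)$, obtained via the endpoint identity $\tfrac{1}{2}a_0=-\sum_{k\geq1}a_k$, and then apply Pythagoras; the underlying orthogonal decomposition (bridge coefficients versus the increment $W_1$ in each coordinate) is the same, just aggregated over the whole tail rather than mode by mode. What your route buys is that it makes explicit the derivation of the KPW series from the Fourier one, a step the paper delegates to the cited references, and it reuses the already-proven error formula for $\widehat{A}_n$; the cost is the extra bookkeeping for $\E[XY]=0$ and $\E[Y^2]$, both of which you handle correctly. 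One small suggestion: rather than justifying $\tfrac{1}{2}a_0=-\sum_{k\geq1}a_k$ by pointwise evaluation of the random Fourier series at $t=0$ (which needs almost sure uniform convergence from~\cite{kahane}), it is cleaner and entirely internal to the paper to verify it in $L^2(\pr)$ from the stated covariances: by~(\ref{eq:a02}), (\ref{eq:akal}) and~(\ref{eq:remaining_covariances}),
\begin{equation*}
\E\left[\left(\frac{1}{2}a_0+\sum_{k=1}^N a_k\right)^{\!2}\,\right]=\frac{1}{12}-\sum_{k=1}^N\frac{1}{k^2\pi^2}+\sum_{k=1}^N\frac{1}{2k^2\pi^2}=\frac{1}{12}-\sum_{k=1}^N\frac{1}{2k^2\pi^2}\longrightarrow 0
\end{equation*}
as $N\to\infty$, using the value of $\zeta(2)$ derived in Section~\ref{sect:zeta}.
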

\begin{proof}
As for Theorem \ref{thm:levy_fourier_approx}, the result follows by direct calculation. The constant is larger because, for $i\in\{1,\dots,d\}$ and $k\in\N$,
\begin{equation*}
\E\bigg[\Big(b_k^{(i)} - \frac{1}{k\pi}W_1^{(i)}\Big)^2\m\bigg] = \frac{3}{2k^2\pi^2} = 3\,\E\Big[\big(b_k^{(i)}\big)^2\m\Big]\;,
\end{equation*}
which yields the required result.
\end{proof}

Finally, we give the approximation of L\'{e}vy area corresponding to the polynomial expansion~(\ref{eq:bridge_poly_expansion}).
Although this series expansion of Brownian L\'{e}vy area was first proposed in \cite{KuznetsovLevyArea1}, a straightforward derivation based on the polynomial expansion~(\ref{eq:bridge_poly_expansion}) was only established much later in \cite{KuznetsovLevyArea2}. However in \cite{KuznetsovLevyArea1, KuznetsovLevyArea2}, the optimal bound for the
mean squared error of the approximation is not identified.
We will present a similar derivation to \cite{KuznetsovLevyArea2}, but with a simple formula for the mean squared error.

\begin{thm}[Polynomial approximation of Brownian L\'{e}vy area, see {\cite[p. 47]{KuznetsovLevyArea1}} and {\cite{KuznetsovLevyArea2}}]\label{thm:poly_area_error}
For $n\in\N_0$, we define a random antisymmetric $d\times d$ matrix $\widebar{A}_{n}$ by, for $n\in\N$ and $i,j\in\{1,\dots,d\}$,
\begin{equation*}
\widebar{A}_{n}^{\m(i,j)} := \frac{1}{2}\left(W_1^{(i)}c_1^{(j)} - c_1^{(i)}W_1^{(j)}\right) + \frac{1}{2}\sum_{k=1}^{n-1}\left(c_k^{(i)}c_{k+1}^{(j)} - c_{k+1}^{(i)}c_k^{(j)}\right)\,,
\end{equation*}
where the normal random vectors $\{c_k\}_{k\in\N}$ are the coefficients from the polynomial expansion (\ref{eq:bridge_poly_expansion}), that is, the coordinates are independent and defined according to (\ref{eq:poly_coefficients}), and we set
\begin{equation*}
    \widebar{A}_{0}^{\m(i,j)} := 0\,.
\end{equation*}
Then, for $n\in\N_0$ and for $i,j\in\{1,\dots,d\}$ with $i\neq j$, we have
\begin{equation*}
\E\bigg[\Big(A_{0,1}^{(i,j)} - \widebar{A}_{n}^{\m(i,j)}\Big)^2\m\bigg] = \frac{1}{8n+4}\;.
\end{equation*}
\end{thm}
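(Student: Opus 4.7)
My plan is to first derive the closed-form series expansion
\begin{equation*}
A_{0,1}^{(i,j)} = \frac{1}{2}\bigl(W_1^{(i)}c_1^{(j)} - c_1^{(i)}W_1^{(j)}\bigr) + \frac{1}{2}\sum_{k=1}^{\infty}\bigl(c_k^{(i)}c_{k+1}^{(j)} - c_{k+1}^{(i)}c_k^{(j)}\bigr)
\end{equation*}
by substituting the polynomial expansion~(\ref{eq:bridge_poly_expansion}) into the definition of $A_{0,1}$, recognise $\widebar{A}_n$ as the partial sum of this series at level $n$, and then evaluate the $L^2(\pr)$-norm of the tail via a telescoping partial-fraction computation.

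For the derivation, I would decompose $W_r = rW_1 + B_r$ and expand the antisymmetric integral in Definition~\ref{def:levy_area}. The $W_1^{(i)}W_1^{(j)}$ term cancels upon antisymmetrisation, the bridge stochastic integrals $\int_0^1 B_r^{(i)}\dd B_r^{(j)} - \int_0^1 B_r^{(j)}\dd B_r^{(i)}$ survive, and the $W_1$-linear pieces reduce, via $\int_0^1 r\dd B_r = -\int_0^1 B_r\dd r$ from integration by parts with $B_0 = B_1 = 0$, to $2\bigl(W_1^{(j)}\int_0^1 B_r^{(i)}\dd r - W_1^{(i)}\int_0^1 B_r^{(j)}\dd r\bigr)$. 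Writing $B_r = \sum_{k \geq 1}(2k+1)c_k\m I_k(r)$ with $I_k(t) := \int_0^t Q_k(s)\dd s$, the core calculation is then $\int_0^1 I_k Q_l\dd r$: using the recursion $Q_l = (Q_{l+1}' - Q_{l-1}')/(2(2l+1))$ from~(\ref{eq:id_shiftedLP}), integrating by parts, and invoking $I_k(0) = I_k(1) = 0$ for $k \geq 1$ together with orthogonality $\int_0^1 Q_k Q_l\dd r = \delta_{kl}/(2k+1)$, one obtains
\begin{equation*}
\int_0^1 I_k(r) Q_l(r)\dd r = \frac{\delta_{k,l-1} - \delta_{k,l+1}}{2(2k+1)(2l+1)}\;,
\end{equation*}
so only adjacent indices contribute. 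Antisymmetrising yields the $c_kc_{k+1}$ series above. For the $W_1$-linear piece, a second integration by parts gives $c_1 = \int_0^1(2r-1)\dd B_r = -2\int_0^1 B_r\dd r$, hence $\int_0^1 B_r\dd r = -c_1/2$, which assembles the boundary term $\frac{1}{2}(W_1^{(i)}c_1^{(j)} - c_1^{(i)}W_1^{(j)})$.

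Given the series and the identification of $\widebar{A}_n$ as its truncation (with $n=0$ handled as a boundary case, since $\widebar{A}_0 = 0$ and the full variance $1/4$ is recovered by adding the $W_1c_1$ contribution $1/6$ to the $k \geq 1$ tail contribution $1/12$), the error for $n \geq 1$ and $i \neq j$ is
\begin{equation*}
A_{0,1}^{(i,j)} - \widebar{A}_n^{(i,j)} = \frac{1}{2}\sum_{k=n}^{\infty}\bigl(c_k^{(i)}c_{k+1}^{(j)} - c_{k+1}^{(i)}c_k^{(j)}\bigr)\;.
\end{equation*}
Since $\{c_k^{(i)}\}$ and $\{c_k^{(j)}\}$ are independent for $i \neq j$ and each sequence consists of independent Gaussians with $\E[(c_k^{(\ell)})^2] = 1/(2k+1)$, every cross-expectation with $k \neq l$ leaves an isolated first-moment factor and hence vanishes, while each diagonal $k = l$ contributes $2/((2k+1)(2k+3))$. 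Therefore
\begin{equation*}
\E\bigl[(A_{0,1}^{(i,j)} - \widebar{A}_n^{(i,j)})^2\bigr] = \frac{1}{2}\sum_{k=n}^{\infty}\frac{1}{(2k+1)(2k+3)}\;,
\end{equation*}
and the partial fraction $\frac{1}{(2k+1)(2k+3)} = \frac{1}{2}\bigl(\frac{1}{2k+1} - \frac{1}{2k+3}\bigr)$ telescopes to $\frac{1}{2(2n+1)}$, yielding $\frac{1}{4(2n+1)} = \frac{1}{8n+4}$.

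The main obstacle is the bookkeeping in the expansion step: the evaluation of $\int_0^1 I_k Q_l\dd r$ via the Legendre recursion is what forces the double series to collapse to a single sum of adjacent-index products, and the $W_1$-linear contribution must be precisely identified as the boundary $W_1c_1$ term rather than leaking into the $k \geq 1$ sum. Once the series representation is in place, the second-moment calculation and the telescoping are routine algebra.
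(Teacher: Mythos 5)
Your proposal is correct and follows essentially the same route as the paper's proof: substitute the polynomial expansion into the L\'{e}vy area, use the shifted Legendre recursion together with orthogonality to reduce $\int_0^1 \bigl(\int_0^t Q_k\bigr)\dd\bigl(\int_0^t Q_l\bigr)$ to adjacent-index contributions, identify the boundary term via $Q_1(t)=2t-1$, and evaluate the tail by the same telescoping sum, including the $n=0$ check $\frac{1}{6}+\frac{1}{12}=\frac{1}{4}$. The only cosmetic difference is that you apply the recursion to $Q_l$ and integrate by parts using $I_k(0)=I_k(1)=0$, whereas the paper applies it to $\int_0^t Q_k$ directly; the resulting adjacent-index coefficients agree.
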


\begin{remark0}\label{remark:poly}\rm
By applying Lemma \ref{lem:itoBB}, the orthogonality of shifted Legendre polynomials and the independence of the components of a Brownian bridge, we see that the coefficients $\{c_k\}_{k\in\N}$ are independent and distributed as $c_k\sim\mathcal{N}\big(0,\frac{1}{2k + 1}I_d\big)$ for $k\in\N$.
\end{remark0}

\begin{proof} It follows from the polynomial expansion (\ref{eq:bridge_poly_expansion}) that, for $i,j\in\{1,\dots,d\}$ with $i\neq j$,
\begin{equation}\label{eq:initial_expand}
\int_0^1 B_t^{(i)}\dd B_t^{(j)} = \int_0^1 \left(\sum_{k=1}^\infty (2k+1)\, c_k^{(i)} \int_0^t Q_k(r)\dd r\right)\dd\left(\sum_{l=1}^\infty (2l+1)\, c_l^{(j)} \int_0^t Q_l(r)\dd r \right)\,,
\end{equation}
where the series converge in $L^2(\mathbb{P})$. To simplify~(\ref{eq:initial_expand}), we use the identities in~(\ref{eq:id_shiftedLP}) for shifted Legendre polynomials
as well as the orthogonality of shifted Legendre polynomials to obtain that, for $k, l\in\N$,
\begin{align*}
\int_0^1 \left(\int_0^t Q_k(r)\dd r\right) \dd \left(\int_0^t Q_l(r)\dd r\right) & = \int_0^1 Q_l(t)\int_0^t Q_k(r)\dd r \dd t\\
& = \dfrac{1}{2(2k+1)}\m{\displaystyle\int_0^1 Q_l(t)\left(Q_{k+1}(t) - Q_{k-1}(t)\right) \dd t}\\[3pt]
& = \begin{cases}\phantom{-}\dfrac{1}{2(2k+1)}\m{\displaystyle\int_0^1 \left(Q_{k+1}(t)\right)^2 \dd t} & \text{if }l = k + 1\\[9pt]
-\dfrac{1}{2(2k+1)}\m{\displaystyle\int_0^1 \left(Q_{k-1}(t)\right)^2 \dd t} & \text{if }l = k - 1\\[8pt]
\phantom{-}0 & \text{otherwise}\end{cases}\;.
\end{align*}

Evaluating the above integrals gives, for $k, l\in\N$,
\begin{align}\label{eq:relation4legendre}
\int_0^1 \left(\int_0^t Q_k(r)\dd r\right) \dd \left(\int_0^t Q_l(r)\dd r\right) & = \begin{cases}
\phantom{-}\dfrac{1}{2(2k+1)(2k + 3)} & \text{if }l = k + 1\\[9pt]
-\dfrac{1}{2(2k+1)(2k - 1)} & \text{if }l = k - 1\\[8pt]
\phantom{-}0 & \text{otherwise}\end{cases}\;.
\end{align}
In particular, for $k,l\in\N$, this implies that
\begin{align*}
\int_0^1 \left((2k+1) c_k^{(i)} \int_0^t Q_k(r)\dd r\right) \dd \left((2l+1) c_l^{(j)} \int_0^t Q_l(r)\dd r\right) & = \begin{cases}
\phantom{-}\dfrac{1}{2}c_k^{(i)}c_{k+1}^{(j)} & \text{if }l = k + 1\\[9pt]
-\dfrac{1}{2}c_k^{(i)}c_{k-1}^{(j)} & \text{if }l = k - 1\\[7pt]
\phantom{-}0 & \text{otherwise}\end{cases}\;.
\end{align*}
Therefore, by the bounded convergence theorem in $L^2(\mathbb{P})$, we can simplify the expansion~(\ref{eq:initial_expand}) to
\begin{align}\label{eq:second_expand}
\int_0^1 B_t^{(i)}\dd B_t^{(j)} & = \frac{1}{2}\sum_{k=1}^{\infty}\left(c_k^{(i)}c_{k+1}^{(j)} - c_{k+1}^{(i)}c_k^{(j)}\right)\;,
\end{align}
where, just as before, the series converges in $L^2(\mathbb{P})$. Since $W_t = t\m W_1 + B_t$ for $t\in[0,1]$, we have, for $i,j\in\{1,\dots,d\}$ with $i\neq j$,
\begin{align*}
\int_0^1 W_t^{(i)}\dd W_t^{(j)} & =  \int_0^1 \big(t W_1^{(i)}\big)\dd \big(t W_1^{(j)}\big) + \int_0^1 B_t^{(i)}\dd \big(t W_1^{(j)}\big)+ \int_0^1 \big(t W_1^{(i)}\big)\dd B_t^{(j)} + \int_0^1 B_t^{(i)}\dd B_t^{(j)}\\[3pt]
& = \frac{1}{2}W_1^{(i)}W_1^{(j)} - W_1^{(j)}\int_0^1 t \dd B_t^{(i)} + W_1^{(i)}\int_0^1 t \dd  B_t^{(j)} + \int_0^1 B_t^{(i)}\dd B_t^{(j)}\;,
\end{align*}
where the second line follows by integration by parts. As $$\int_0^1 W_t^{(i)}\dd W_t^{(j)} = \frac{1}{2}W_1^{(i)}W_1^{(j)} + A_{0,1}^{(i,j)}$$ and $Q_1(t) = 2t - 1$, the above and (\ref{eq:second_expand}) imply that, for $i,j\in\{1,\dots,d\}$,
\begin{align*}
A_{0,1}^{(i,j)} = \frac{1}{2}\left(W_1^{(i)}c_1^{(j)} - c_1^{(i)}W_1^{(j)}\right) + \frac{1}{2}\sum_{k=1}^{\infty}\left(c_k^{(i)}c_{k+1}^{(j)} - c_{k+1}^{(i)}c_k^{(j)}\right)\;.
\end{align*}
By the independence of the normal random vectors in the sequence $\{c_k\}_{k\in\N}$, it is straightforward to compute the mean squared error in approximating $A_{0,1}$ and we obtain, for $n\in\N$ and for $i,j\in\{1,\dots,d\}$ with $i\neq j$,
\begin{align*}
\E\bigg[\Big(A_{0,1}^{(i,j)} - \widebar{A}_{n}^{\m(i,j)}\Big)^2\m\bigg] & = \E\left[\left(\frac{1}{2}\sum_{k=n}^{\infty} \left(c_{k}^{(i)}c_{k+1}^{(j)} - c_{k+1}^{(i)}c_{k}^{(j)}\right)\right)^2\m\right]\\
& = \frac{1}{4}\sum_{k=n}^{\infty} \frac{2}{(2k+1)(2k+3)}\\
& = \frac{1}{4}\sum_{k=n}^{\infty} \left(\frac{1}{2k+1} - \frac{1}{2k+3}\right)\\
& = \frac{1}{8n+4}\;,
\end{align*}
by Remark \ref{remark:poly}. Similarly, as the normal random vector $W_1$ and the ones in the sequence $\{c_k\}_{k\in\N}$ are independent, we have
\begin{align*}
\E\bigg[\Big(A_{0,1}^{(i,j)} - \widebar{A}_{0}^{(i,j)}\Big)^2\m\bigg] & =\E\left[\left(\frac{1}{2}\left(W_1^{(i)}c_1^{(j)} - c_1^{(i)}W_1^{(j)}\right)\right)^2\m\right] + \E\left[\left(\frac{1}{2}\sum_{k=1}^{\infty} \left(c_{k}^{(i)}c_{k+1}^{(j)} - c_{k+1}^{(i)}c_{k}^{(j)}\right)\right)^2\m\right]\\
& = \frac{1}{6} + \frac{1}{12} = \frac{1}{4}\;,
\end{align*}
as claimed.
\end{proof}

Given that we have now considered three different strong approximations of Brownian L\'{e}vy area,
it is reasonable to compare their respective rates of convergence. Combining the above theorems, we obtain the following result.

\begin{cor}[Asymptotic convergence rates of L\'{e}vy area approximations]
For $n\in\N$, we set $N = 2n$ so that the number of Gaussian random vectors required to define the L\'{e}vy area approximations $\widehat{A}_{n}, \widetilde{A}_{n}$ and $\widebar{A}_{2n}$ is $N$ or $N-1$, respectively.
Then, for $i,j\in\{1,\dots,d\}$ with $i\neq j$ and as $N\to\infty$, we have
\begin{align*}
\E\bigg[\Big(A_{0,1}^{(i,j)} - \widehat{A}_{n}^{\m(i,j)}\Big)^2\m\bigg] & \sim \frac{1}{\pi^2}\bigg(\frac{1}{N}\bigg)\;,\\[3pt]
\E\bigg[\Big(A_{0,1}^{(i,j)} - \widetilde{A}_{n}^{\m(i,j)}\Big)^2\m\bigg] & \sim \frac{3}{\pi^2}\bigg(\frac{1}{N}\bigg)\;,\\[3pt]
\E\bigg[\Big(A_{0,1}^{(i,j)} - \widebar{A}_{2n}^{\m(i,j)}\Big)^2\m\bigg] & \sim \frac{1}{8}\bigg(\frac{1}{N}\bigg)\;.
\end{align*}
In particular, the polynomial approximation of Brownian L\'{e}vy area is more accurate than the Kloeden--Platen--Wright approximation, both of which use only independent Gaussian vectors.
\end{cor}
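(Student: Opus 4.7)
The corollary is a direct asymptotic consequence of the three exact mean-squared error formulas just established, so the plan is essentially to apply those formulas, substitute $N=2n$, and control the remaining tail sum. Concretely, Theorem~\ref{thm:levy_fourier_approx} gives the Fourier-based error as $\frac{1}{2\pi^2}\sum_{k=n}^{\infty}\frac{1}{k^2}$, the Kloeden--Platen--Wright theorem gives $\frac{3}{2\pi^2}\sum_{k=n}^{\infty}\frac{1}{k^2}$, and Theorem~\ref{thm:poly_area_error} gives the closed form $\frac{1}{8n+4}$ for the polynomial approximation, so the only analytic content left is to track constants and tail behaviour.

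The plan is to first treat the Fourier and Kloeden--Platen--Wright cases together. Using the tail estimate $\lim_{n\to\infty} n\sum_{k=n}^{\infty}\frac{1}{k^2}=1$, which is the content of~(\ref{eq:Nk21}) after a trivial re-indexing (the difference between $\sum_{k=n}^\infty$ and $\sum_{k=n+1}^\infty$ is $1/n^2$, which contributes a lower-order term once multiplied by $n$), I would write
\begin{equation*}
    \sum_{k=n}^{\infty}\frac{1}{k^2}\sim\frac{1}{n}=\frac{2}{N}\quad\text{as }N\to\infty\;.
\end{equation*}
Substituting this into the two Fourier-based expressions yields the asserted $\frac{1}{\pi^2}\cdot\frac{1}{N}$ and $\frac{3}{\pi^2}\cdot\frac{1}{N}$ asymptotics, respectively.

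For the polynomial approximation, I would simply evaluate the closed-form expression at $2n$, obtaining
\begin{equation*}
    \E\bigg[\Big(A_{0,1}^{(i,j)} - \widebar{A}_{2n}^{\m(i,j)}\Big)^2\m\bigg]=\frac{1}{8(2n)+4}=\frac{1}{8N+4}\sim\frac{1}{8N}\;,
\end{equation*}
which is the third claim. Finally, comparing the leading constants $\frac{1}{8}<\frac{3}{\pi^2}$ immediately gives the closing statement that the polynomial approximation is asymptotically more accurate than the Kloeden--Platen--Wright approximation (both of which are built solely from independent Gaussian vectors).

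There is no real obstacle here; the only subtlety is choosing the correct index shift so that $\sum_{k=n}^\infty k^{-2}$ and $\sum_{k=n+1}^\infty k^{-2}$ produce the same leading asymptotic $1/n$, which follows because the omitted (or added) term $1/n^2$ is of lower order than $1/n$. All other steps are bookkeeping: reading off the three pre-computed error expressions, substituting $N=2n$, and multiplying out the constants.
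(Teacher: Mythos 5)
Your proposal is correct and matches the paper's approach exactly: the paper derives the corollary by combining the three exact error formulas (the Fourier tail $\frac{1}{2\pi^2}\sum_{k=n}^{\infty}k^{-2}$, the Kloeden--Platen--Wright tail $\frac{3}{2\pi^2}\sum_{k=n}^{\infty}k^{-2}$, and the closed form $\frac{1}{8n+4}$) with the substitution $N=2n$ and the elementary tail asymptotic $\sum_{k=n}^{\infty}k^{-2}\sim\frac{1}{n}$. Your handling of the index shift and the final constant comparison $\frac{1}{8}<\frac{3}{\pi^2}$ is exactly what is needed.
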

\begin{remark0}\rm
It was shown in \cite{Dickinson} that $\frac{1}{\pi^2}\hspace{-0.5mm}\left(\frac{1}{N}\right)$ is the optimal asymptotic rate of mean squared convergence for L\'{e}vy area approximations that are measurable with respect to $N$ Gaussian random variables, obtained from linear functionals of the Brownian path.
\end{remark0}

As one would expect, all the L\'{e}vy area approximations converge in $L^{2}(\mathbb{P})$ with a rate of $O(N^{-\frac{1}{2}})$
and thus the main difference between their respective accuracies is in the leading error constant.
More concretely, for sufficiently large $N$, the approximation based on the Fourier expansion of the Brownian bridge is roughly 11\% more accurate in $L^{2}(\mathbb{P})$ than that of the polynomial approximation.
On the other hand, the polynomial approximation is easier to implement in practice as all of the required coefficients are independent. Since it has the largest asymptotic error constant, the Kloeden--Platen--Wright approach gives the least accurate approximation for Brownian L\'{e}vy area.\medskip

We observe that the leading error constants for the L\'{e}vy area approximations resulting from the Fourier series and the polynomial expansion coincide with the average $L^2(\mathbb{P})$ error of their respective fluctuation processes, that is, applying Fubini's theorem followed by the limit theorems for the fluctuation processes $(F_t^{N,2})_{t\in[0,1]}$ and $(F_t^{N,3})_{t\in[0,1]}$ defined by~(\ref{fourier_fluctuation}) and (\ref{poly_fluctuation}), respectively, gives
\begin{align*}
    \lim_{N\to\infty}\E\left[\int_0^1 \left(F_t^{N,2}\right)^2 \dd t \right]
    &=\int_0^1\frac{1}{\pi^2}\dd t=\frac{1}{\pi^2}\;,\\[3pt]
    \lim_{N\to\infty}\E\left[\int_0^1 \left(F_t^{N,3}\right)^2 \dd t \right]
    &=\int_0^1 \frac{1}{\pi}\sqrt{t(1-t)}\dd t=\frac{1}{8}\;.
\end{align*}
To demonstrate how this correspondence arises, we close with some heuristics. For $N\in\N$,
we consider an approximation of the Brownian bridge which uses $N$ random vectors, and we denote the corresponding approximation of Brownian motion $(W_t)_{t\in[0,1]}$ by $(S_t^N)_{t\in[0,1]}$, where the difference between Brownian motion and its associated Brownian bridge is the first term in the approximation. In the Fourier and polynomial approaches, the error in approximating Brownian L\'{e}vy area is then essentially given by
\begin{equation*}
    \int_0^1 W_t^{(i)}\dd W_t^{(j)}-\int_0^1 S_t^{N,(i)}\dd S_t^{N,(j)}
    =\int_0^1 \left(W_t^{(i)}-S_t^{N,(i)}\right)\dd W_t^{(j)}
    +\int_0^1 S_t^{N,(i)}\dd \left(W_t^{(j)}-S_t^{N,(j)}\right)\;.
\end{equation*}
If one can argue that
\begin{equation*}
    \int_0^1 S_t^{N,(i)}\dd \left(W_t^{(j)}-S_t^{N,(j)}\right)=O\left(\frac{1}{N}\right)\;,
\end{equation*}
which, for instance, for the polynomial approximation follows directly from~(\ref{eq:relation4legendre}) and Remark~\ref{remark:poly}, then in terms of the fluctuation processes $(F_t^N)_{t\in[0,1]}$ defined by
\begin{equation*}
    F_t^N=\sqrt{N}\left(W_t-S_t^{N}\right)\;,
\end{equation*}
the error of the L\'{e}vy area approximation can be expressed as
\begin{equation*}
    \frac{1}{\sqrt{N}}\int_0^1 F_t^{N,(i)}\dd W_t^{(j)}+O\left(\frac{1}{N}\right)\;.
\end{equation*}
Thus, by It\^{o}'s isometry and Fubini's theorem, the leading error constant in the mean squared error is indeed given by
\begin{equation*}
    \int_0^1\lim_{N\to\infty}\E\left[\left(F_t^{N,(i)}\right)^2\m\right]\dd t\;.
\end{equation*}\smallskip

This connection could be interpreted as an asymptotic It\^{o} isometry for L\'{e}vy area approximations.
\newpage
\appendix

\section{Summarising tables}
\label{appendix}
\begin{center}
\begin{table}[h]
  \centering
  \begin{tabular}{cl}
    \toprule\\[-9pt]
    \begin{minipage}{4.5cm} \begin{center} Type of expansion \end{center}\end{minipage} & \begin{minipage}{9.5cm} \begin{center}Expansion of the Brownian bridge $(B_t)_{t\in[0,1]}$\end{center}\end{minipage}\\
     & \\[-9pt]
    \midrule
     \begin{minipage}{4.5cm}
     \begin{center}
     Karhunen--Lo\`{e}ve\\[3pt] 
(Lo{\`e}ve~\cite{loeve2})
     \end{center}
     \end{minipage} & 
     \begin{minipage}{9.5cm}
     \begin{equation*}
     B_t=\sum_{k=1}^\infty\frac{2\sin(k\pi t)}{k\pi}\int_0^1\cos(k\pi r)\dd B_r
     \end{equation*}     
     \end{minipage}\\
     & \\[-9pt]
     \midrule 
     \begin{minipage}{4.5cm}
     \begin{center}
     Fourier series\\[3pt] 
(Kahane~\cite{kahane} or Kloeden--Platen~\cite{kloedenplaten})
     \end{center}
     \end{minipage} &     
     \begin{minipage}{9.5cm}
     \begin{equation*}
     B_t=\frac{1}{2}a_0+\sum_{k=1}^\infty\left(
     a_k\cos(2k\pi t)+b_k\sin(2k\pi t)\right)
     \end{equation*}\\
     with, for $k\in\N_0$,\smallskip 
    \begin{align*}
      a_k=2\int_0^1 \cos(2k\pi r)B_r\dd r\;,\quad
      b_k=2\int_0^1 \sin(2k\pi r)B_r\dd r
    \end{align*}
     \end{minipage}\\
     & \\[-9pt] 
     \midrule
     \begin{minipage}{4.5cm}
     \begin{center}
     Polynomial \\[3pt] 
     (Foster, Lyons, Oberhauser~\cite{foster} and Habermann~\cite{semicircle})
     \end{center}
     \end{minipage} & \begin{minipage}{9.5cm}
     \begin{equation*}
     B_t=\sum_{k=1}^\infty (2k+1)\m c_k \int_0^t Q_k(r)\dd r
     \end{equation*}\\
     with, for $k\in\N$, 
     \begin{equation*}
     c_k= \int_0^1 Q_k(r)\dd B_r
     \end{equation*}\\
     and $Q_k$ denoting the shifted Legendre polynomial of degree $k$
     \end{minipage}\\
     & \\[-9pt]
    \bottomrule
  \end{tabular}
  \caption{Table summarising the Brownian bridge expansions considered in this paper.}
  \label{table:bridge_expansions}
\end{table}

\newpage
\begin{table}[ht]
  \centering
  \begin{tabular}{cl}
    \toprule\\[-9pt]
    \begin{minipage}{4.5cm} \begin{center}Type of expansion\end{center}\end{minipage} & \begin{minipage}{9.5cm} \begin{center}Expansion of the Brownian L\'{e}vy area $A_{0,1}$\end{center}\end{minipage}\\
     & \\[-9pt]
    \midrule
     \begin{minipage}{4.5cm}
     \begin{center}
     Fourier series \\[3pt] 
     (Kloeden--Platen \cite{kloedenplaten} and Milstein \cite{MilsteinBook})
     \end{center}
     \end{minipage} &     
     \begin{minipage}{9.5cm}
     \begin{equation*}
	 A_{0,1}^{(i,j)} = \frac{1}{2}\left(a_0^{(i)}W_1^{(j)} - W_1^{(i)}a_0^{(j)}\right) + \pi\sum_{k=1}^\infty k\left(a_{k}^{(i)}b_k^{(j)} - b_k^{(i)}a_{k}^{(j)}\right)
     \end{equation*}\\
     with, for $k\in\N_0$,\smallskip 
    \begin{align*}
      a_k^{(i)}=2\int_0^1 \cos(2k\pi r)B_r^{(i)}\dd r\;,\quad
      b_k^{(i)}=2\int_0^1 \sin(2k\pi r)B_r^{(i)}\dd r
    \end{align*}
     \end{minipage}\\
     & \\[-9pt] 
     \midrule\\[-9pt]
     \begin{minipage}{4.5cm}
     \begin{center}
     Fourier series\\[3pt]
     (Kloeden--Platen--Wright \cite{KloedenPlatenWright} and Milstein \cite{MilsteinBook2})\\[3pt] 
     \end{center}
     \end{minipage} & \begin{minipage}{9.5cm}
     \begin{equation*}
	 A_{0,1}^{(i,j)} = \pi\sum_{k=1}^\infty k\left(a_{k}^{(i)}\left(b_k^{(j)} - \frac{W_1^{(j)}}{k\pi}\right) - \left(b_k^{(i)} - \frac{W_1^{(i)}}{k\pi}\right)a_{k}^{(j)}\right)
     \end{equation*}
     \end{minipage}\\
     & \\[-9pt]
     \midrule
     \begin{minipage}{4.5cm}
     \begin{center}
     Polynomial \\[3pt] 
     (Kuznetsov \cite{KuznetsovLevyArea1})
     \end{center}
     \end{minipage} & \begin{minipage}{9.5cm}
     \begin{equation*}
	 A_{0,1}^{(i,j)} = \frac{1}{2}\left(W_1^{(i)}c_1^{(j)} - c_1^{(i)}W_1^{(j)}\right) + \frac{1}{2}\sum_{k=1}^\infty \left(c_k^{(i)}c_{k+1}^{(j)} - c_{k+1}^{(i)}c_k^{(j)}\right)
     \end{equation*}\\
     with, for $k\in\N$,\smallskip 
     \begin{equation*}
     c_k^{(i)}= \int_0^1 Q_k(r)\dd B_r^{(i)}
     \end{equation*}\\
     and $Q_k$ denoting the shifted Legendre polynomial of degree $k$
     \end{minipage}\\
     & \\[-9pt]
    \bottomrule
  \end{tabular}
  \caption{Table summarising the L\'{e}vy area expansions considered in this paper.}
  \label{table:area_expansions}
  \end{table}
\end{center}
\bibliographystyle{plain}
\bibliography{references}

\end{document}